\newtheorem{lem}{Lemma}
\newtheorem{thm}[lem]{Theorem}
\newtheorem{cor}[lem]{Corollary}
\newtheorem{obs}[lem]{Observation}
\newtheorem{rem}[lem]{Remark}
\newtheorem{con}[lem]{Conjecture}
\begin{document}
\title{Anti-$k$-labeling of graphs}

\author{Xiaxia Guan$^a$, Shurong Zhang$^a$, Rong-hua Li$^b$, Lin Chen$^c$, Weihua Yang$^a$\footnote{Corresponding author.
 E-mail addresses: ywh222@163.com(W.Yang).} \\
  \small $^a$Department of Mathematics, Taiyuan University of
Technology, Shanxi Taiyuan-030024, China \\
  \small $^b$School of Computer Science \& Technology, Beijing Institute of Technology, Beijing-100081, China\\
  \small $^c$Lab. de Recherche en Informatique (LRI), CNRS, University
of Paris-Sud XI, 91405 Orsay, France
 }


\maketitle

{\small{\bf Abstract.}\quad
It is well known that the labeling
problems of graphs arise in many (but not limited to) networking and
telecommunication contexts. In this paper we introduce the anti-$k$-labeling problem of graphs which we seek to minimize the similarity (or distance) of neighboring nodes. For example, in the fundamental frequency assignment problem in wireless networks where each node is
assigned a frequency, it is usually desirable to limit or minimize the
frequency gap between neighboring nodes so as to limit interference.

Let $k\geq1$ be an integer and $\psi$ is a labeling function (anti-$k$-labeling) from $V(G)$ to $\{1,2,\cdots,k\}$ for a graph $G$. A  {\em no-hole anti-$k$-labeling} is an anti-$k$-labeling using all labels between 1 and $k$. We define $w_{\psi}(e)=|\psi(u)-\psi(v)|$ for an edge $e=uv$ and $w_{\psi}(G)=\min\{w_{\psi}(e):e\in E(G)\}$ for  an anti-$k$-labeling $\psi$  of the graph $G$.  {\em The anti-$k$-labeling number}  of a graph $G$, $mc_k(G)$ is $\max\{w_{\psi}(G): \psi\}$. In this paper, we first show that $mc_k(G)=\lfloor \frac{k-1}{\chi-1}\rfloor$, and the problem that determines $mc_k(G)$  of graphs is NP-hard. We mainly obtain the lower bounds on no-hole anti-$n$-labeling number  for trees, grids and $n$-cubes.
\par
\vskip 0.5cm  Key words: Anti-$k$-labeling problem; No-hole anti-$k$-labeling number; Trees; Channel assignment problem

\section{Problems}

All graphs considered here are simple and finite. Definitions which are not given here may be found in \cite{bondy}. Let $k\geq1$ be an integer. An {\em anti-$k$-labeling} $\psi$ of a graph $G$ is a mapping from $V(G)$ to $\{1,2,\cdots,k\}$. An anti-$k$-labeling $\psi$ of $G$ is called a {\em no-hole anti-$k$-labeling} if it uses all labels between 1 and $k$. We define $w_{\psi}(e)=|\psi(u)-\psi(v)|$ ($w^{nh}_{\psi}(e)=|\psi(u)-\psi(v)|$) for an edge $e=uv$ and $w_{\psi}(G)=\min\{w_{\psi}(e):e\in E(G)\}$ ($w^{nh}_{\psi}(G)=\min\{w^{nh}_{\psi}(e):e\in E(G)\}$) for  an anti-$k$-labeling $\psi$ (a no-hole anti-$k$-labeling $\psi$) of the graph $G$. The {\em anti-$k$-labeling number} ({\em the no-hole anti-$k$-labeling number}) of a graph $G$, $mc_k(G)$ ($mc^{nh}_{k}(G)$), is $\max\{w_{\psi}(G): \psi\}$ ($\max\{w^{nh}_{\psi}(G): \psi\}$). We refer to a labeling $\psi$ with $w_{\psi}(G)=mc_k(G)$ ($w^{nh}_{\psi}(G)=mc^{nh}_{k}(G)$) is called an {\em optimal anti-$k$-labeling} (an {\em optimal no-hole anti-$k$-labeling}) for a graph $G$.   Such  (no-hole) anti-$k$-labeling number problem is our focus in this paper.

The above labeling problem represents a generic class of labeling
problems arising in many (but not limited to) networking and
telecommunication contexts, in which we seek to minimize the similarity
(or distance) of neighboring nodes. For example, in the fundamental
frequency assignment problem in wireless networks where each node is
assigned a frequency, it is usually desirable to limit or minimize the
frequency gap between neighboring nodes so as to limit interference.
Another example relates to the content sharing systems such as
peer-to-peer file sharing systems, where resources (e.g., files) are
replicated at network nodes to reduce resource retrieval time and
increase system robustness. In these systems, to maximize performance
gain, we usually want to place different items in the vicinity of each
node or to place the same items far from each other.

These problems can be cast to the labeling problem where we seek a node
labeling maximizing the minimum labeling distance among neighboring nodes.
Surprisingly, this labeling problem has not yet been analyzed (not even
formulated in a mathematical sense).

In some sense, our focus problem is also  a generalization of  vertex-coloring problem of graphs: Find the  smallest number $m$ such that $G$ has a labeling $f$: $V(G)\rightarrow[0,m)$ satisfying $|f(x)- f(Y)| \geq h$ for $xy \in E(G)$. In fact, $mc_k(G) > 0$ if and only if $k\geq \chi(G)$ for a graph $G$, where $ \chi(G)$ is the chromatic number of  graph $G$. Hence, $ \chi(G)$ is the minimum number such that $mc_k(G) > 0$ for a graph $G$. Since  determining the chromatic number of graphs is NP-hard, the anti-$k$-labeling problem is also NP-hard.

Another related labeling problem (namely, $L(2,1)$-labeling) will be mentioned in Section 4.

\section{$mc_k(G)$ and $\chi(G)$ of graphs}

\begin{obs}\label{obs 1}
If  $H$ is a subgraph of $G$, then  $mc_k(H)\geq mc_k(G)$.
\end{obs}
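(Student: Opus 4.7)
The plan is to exploit the fact that $mc_k(G)$ is defined as the maximum over labelings of a minimum over edges: restricting an optimal labeling of $G$ to a subgraph shrinks the edge set we minimize over, and the minimum of a subfamily of nonnegative numbers is at least the minimum of the whole family.

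Concretely, I would start by fixing an optimal anti-$k$-labeling $\psi^*\colon V(G)\to\{1,\dots,k\}$ of $G$, so that $w_{\psi^*}(G)=mc_k(G)$. Since $H$ is a subgraph of $G$, we have $V(H)\subseteq V(G)$, and the restriction $\phi:=\psi^*\!\restriction_{V(H)}$ is a well-defined map from $V(H)$ into $\{1,\dots,k\}$; hence $\phi$ is a legitimate anti-$k$-labeling of $H$. (There is no issue with labels being unused, because the definition of an anti-$k$-labeling does not require surjectivity — the no-hole condition is only relevant for $mc^{nh}_k$, which this observation does not address.)

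Next, for each edge $e=uv\in E(H)$, the inclusion $E(H)\subseteq E(G)$ gives $e\in E(G)$ as well, so
\[
w_\phi(e)=|\phi(u)-\phi(v)|=|\psi^*(u)-\psi^*(v)|=w_{\psi^*}(e)\;\ge\;w_{\psi^*}(G)=mc_k(G).
\]
Taking the minimum over $e\in E(H)$ preserves the inequality: $w_\phi(H)\ge mc_k(G)$. Finally, by the definition of $mc_k(H)$ as the maximum of $w_\psi(H)$ over all anti-$k$-labelings $\psi$ of $H$, we conclude $mc_k(H)\ge w_\phi(H)\ge mc_k(G)$.

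There is no real obstacle; the only thing worth flagging is the semantic point above that restricting a labeling to a vertex subset always yields a valid anti-$k$-labeling (but would \emph{not} necessarily yield a valid no-hole labeling, which is why the analogous monotonicity for $mc^{nh}_k$ is a genuinely different — and generally false — statement). Modulo that remark, the proof is a one-line restriction argument.
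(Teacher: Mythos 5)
Your proof is correct and is essentially the same argument as the paper's: take an optimal anti-$k$-labeling of $G$, restrict it to $H$, and use $E(H)\subseteq E(G)$ to see that the minimum edge weight can only go up. Your explicit remark about why restriction stays a valid (non-surjective) labeling is a nice clarification the paper leaves implicit.
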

\begin{proof}
 Clearly, for an arbitrary anti-$k$-labeling $\psi$, $w_{\psi}(H)\geq w_{\psi}(G)$ holds. Suppose $\psi$ is an optimal anti-$k$-labeling of $G$ (i.e., $w_{\psi}(G)=mc_{k}(G)$), then  $w_{\psi}(H)\geq w_{\psi}(G)=mc_k(G)$. Hence, $mc_k(H)\geq mc_k(G)$ by the definition of anti-$k$-labeling number.
\end{proof}

Suppose $G_{1}$ and $G_{2}$ are two graphs with $V(G_{1})\cap V(G_{2})=\emptyset$. The {\em union} $G$ of $G_{1}$ and $G_{2}$, denoted by $G=G_{1}\cup G_{2}$, is the graph whose vertex set is $V(G_{1})\cup V(G_{2})$, and edge set is $E(G_{1})\cup E(G_{2})$.

\begin{obs}\label{obs 3}
If $G=G_{1}\cup G_{2}$, then $mc_k(G)=\min\{mc_k(G_{1}),mc_k(G_{2})\}$.
\end{obs}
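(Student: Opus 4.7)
The plan is to show the equality by proving both inequalities separately, using Observation \ref{obs 1} for one direction and a direct labeling construction for the other.

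For the upper bound $mc_k(G) \leq \min\{mc_k(G_1), mc_k(G_2)\}$, I would simply observe that both $G_1$ and $G_2$ are subgraphs of $G$ (their vertex sets and edge sets inject into those of $G$ by the definition of disjoint union). Then Observation \ref{obs 1} applied to each gives $mc_k(G_1) \geq mc_k(G)$ and $mc_k(G_2) \geq mc_k(G)$, whence the minimum is at least $mc_k(G)$.

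For the lower bound $mc_k(G) \geq \min\{mc_k(G_1), mc_k(G_2)\}$, I would construct an explicit anti-$k$-labeling of $G$ from optimal anti-$k$-labelings of the components. Let $\psi_1$ and $\psi_2$ be optimal anti-$k$-labelings of $G_1$ and $G_2$, respectively. Since $V(G_1) \cap V(G_2) = \emptyset$, I can define $\psi : V(G) \to \{1,\dots,k\}$ by $\psi(v) = \psi_1(v)$ if $v \in V(G_1)$ and $\psi(v) = \psi_2(v)$ if $v \in V(G_2)$. Because every edge $e \in E(G)$ belongs to exactly one of $E(G_1)$ or $E(G_2)$, its weight under $\psi$ equals its weight under the corresponding $\psi_i$, so $w_\psi(G) = \min\{w_{\psi_1}(G_1), w_{\psi_2}(G_2)\} = \min\{mc_k(G_1), mc_k(G_2)\}$. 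By the definition of $mc_k(G)$ as a maximum, this yields the desired inequality.

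There is no real obstacle here; the only thing to be a little careful about is verifying that the combined labeling $\psi$ truly has the claimed minimum edge weight, which follows immediately from the fact that in a disjoint union there are no edges between $V(G_1)$ and $V(G_2)$. Combining the two inequalities gives the stated equality.
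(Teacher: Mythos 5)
Your proof is correct and follows essentially the same route as the paper: the upper bound via Observation \ref{obs 1} applied to the subgraphs $G_1$ and $G_2$, and the lower bound by gluing optimal anti-$k$-labelings of the two components into one labeling of $G$, which is valid precisely because the disjoint union has no edges between the parts. You simply spell out the details the paper leaves implicit.
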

\begin{proof}
$mc_k(G)\leq \min\{mc_k(G_{1}),mc_k(G_{2})\}$ following from Observation \ref{obs 1} and $G_{1}$ and $G_{2}$ are subgraphs of $G_{1}\cup G_{2}$. On the other hand, an anti-$k$-labeling of $G_{1}$ together with an anti-$k$-labeling of $G_{2}$ makes an anti-$k$-labeling $\psi$ of $G_{1}\cup G_{2}$ so that $\omega_\psi(G)\geq \min\{mc_k(G_{1}),mc_k(G_{2})\}$. Hence $mc_k(G)\geq \min\{mc_k(G_{1}),mc_k(G_{2})\}$.
\end{proof}

\begin{thm}\label{thm 4}
Let $G$ be  a  graph with chromatic number $\chi=\chi(G)\geq 2$, then $mc_k(G)=\lfloor \frac{k-1}{\chi-1}\rfloor$  for all $k$.
\end{thm}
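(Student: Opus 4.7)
The equality $mc_k(G)=\lfloor (k-1)/(\chi-1)\rfloor$ will be established by proving the two inequalities separately, the lower bound by an explicit construction and the upper bound by converting a good anti-labeling into a proper coloring.

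For the lower bound, I would set $d^\ast=\lfloor (k-1)/(\chi-1)\rfloor$, fix an optimal proper coloring $c:V(G)\to\{1,\dots,\chi\}$ of $G$, and define the anti-$k$-labeling
\[
\psi(v)=1+(c(v)-1)\,d^\ast.
\]
Then $\psi(v)\leq 1+(\chi-1)d^\ast\leq k$ by the definition of $d^\ast$, so $\psi$ is indeed a map into $\{1,\dots,k\}$. For every edge $uv$ one has $c(u)\neq c(v)$, hence $|\psi(u)-\psi(v)|=|c(u)-c(v)|\cdot d^\ast\geq d^\ast$, which gives $mc_k(G)\geq d^\ast$.

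For the upper bound, I would reverse the process: starting from any anti-$k$-labeling $\psi$ with weight $d=w_\psi(G)$, partition the label set $\{1,\dots,k\}$ into blocks $I_j=\{(j-1)d+1,\dots,jd\}$ of $d$ consecutive integers (the last block possibly shorter), and define $c(v)=j$ whenever $\psi(v)\in I_j$. Any two vertices receiving the same color $j$ satisfy $|\psi(u)-\psi(v)|\leq d-1<d$, so by the definition of $w_\psi(G)$ they cannot be adjacent; thus $c$ is a proper coloring of $G$ using $\lceil k/d\rceil$ colors, and therefore $\chi\leq\lceil k/d\rceil$.

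The only routine point, which I would carry out carefully, is the arithmetic step $\lceil k/d\rceil=\lfloor (k-1)/d\rfloor+1$ (valid for $d\geq 1$); this turns $\chi\leq\lceil k/d\rceil$ into $(\chi-1)d\leq k-1$, i.e.\ $d\leq\lfloor (k-1)/(\chi-1)\rfloor=d^\ast$, as required. Applying this to any optimal $\psi$ yields $mc_k(G)\leq d^\ast$, and combined with the lower bound it closes the proof. There is no real obstacle here: the construction in the lower bound is forced by the requirement that colors of adjacent vertices be mapped to arithmetic progression points at spacing $d^\ast$, and the upper bound is precisely the observation that blocks of $d$ consecutive labels form an independent-set partition.
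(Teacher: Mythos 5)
Your proposal is correct and follows essentially the same route as the paper: the lower bound is the identical construction $\psi(v)=1+(c(v)-1)d^\ast$ from a proper $\chi$-coloring, and the upper bound rests on the same observation that a block of consecutive labels of width below the edge-weight is an independent set. The only difference is presentational: the paper argues by contradiction, packing the labels into $\chi-1$ blocks of size $d^\ast+1$ to force a $(\chi-1)$-coloring, while you argue directly via $\chi\leq\lceil k/d\rceil$ and a short arithmetic manipulation — arguably a slightly cleaner bookkeeping of the same idea.
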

\begin{proof}
 We first show that $mc_k(G)\geq\lfloor \frac{k-1}{\chi-1}\rfloor$. It suffices to show that there exists an anti-$k$-labeling  $\psi$ such that $w_\psi(G)=\lfloor \frac{k-1}{\chi-1}\rfloor$ for a graph $G$ with chromatic number $\chi=\chi(G)$. Let $V_{1},V_{2},\ldots,V_{\chi}$ be a proper $\chi$-coloring of $G$. Then we consider the following labeling $\psi$: label the vertices of $V_{1}$ by 1,  $\ldots$ , label the vertices of $V_{i}$ by $1+(i-1)\lfloor \frac{k-1}{\chi-1}\rfloor$, $\ldots$ , label the vertices of $V_{\chi}$ by $1+(\chi-1)\lfloor \frac{k-1}{\chi-1}\rfloor (\leq k)$. Since $w_{\psi}(G)=\min\{w_{\psi}(e):e\in E(G)\}=\lfloor \frac{k-1}{\chi-1}\rfloor$ due to $V_{i}$ ($i=1,2,\ldots,\chi$) being an independent set. Hence, $mc_k(G)\geq\lfloor \frac{k-1}{\chi-1}\rfloor$.

We next show that $mc_k(G)\leq\lfloor \frac{k-1}{\chi-1}\rfloor$. Let $\psi$ be an optimal anti-$k$-labeling of $G$ and $(V_{1},V_{2},\ldots,V_{k})$ be a partition  of   $V(G)$ under $\psi$, where the vertices in $V_{i}$ have label $i$, $i=1,2,\ldots,k$. Assume $mc_k(G)\geq\lfloor\frac{k-1}{\chi-1}\rfloor+1$.   We colour the vertices of $V_{1},V_{2},\ldots,V_{\lfloor\frac{k-1}{\chi-1}\rfloor+1}$ with color $c_1$, $\cdots$, colour the vertices of $V_{(i-1)\lfloor\frac{k-1}{\chi-1}\rfloor+i},V_{(i-1)\lfloor\frac{k-1}{\chi-1}\rfloor+i+1},\ldots,V_{i\lfloor\frac{k-1}{\chi-1}\rfloor+i}$ with color $c_i$ ($i=1,2,\ldots,\chi-2$), and colour the vertices of $V_{(\chi-2)\lfloor\frac{k-1}{\chi-1}\rfloor+\chi-1},V_{(\chi-2)\lfloor\frac{k-1}{(\chi-1)}\rfloor+\chi},\ldots,V_{k}$ with color $c_{\chi-1}$. Note $k\leq(\chi-1)(\lfloor\frac{k-1}{\chi-1}\rfloor)+\chi-1$. And the vertices of $V_{i}$ are not adjacent to the vertices of $V_{j}$ ($1 \leq j\leq k$), $j=i-\lfloor\frac{k-1}{\chi-1}\rfloor,i-\lfloor\frac{k-1}{\chi-1}\rfloor+1,\ldots,i+\lfloor\frac{k-1}{\chi-1}\rfloor$ by the assumption $mc_k(G)\geq\lfloor\frac{k-1}{\chi-1}\rfloor+1$. Thus, the vertices of coloring $c_i$ ($i=1,2,\ldots,\chi-1$) are not adjacent. This implies a proper $(\chi-1)$-coloring of  $G$,  a contradiction. Therefore $mc_k(G)=\lfloor \frac{k-1}{\chi-1}\rfloor$.
\end{proof}

By Theorem \ref{thm 4},   $mc_{k'}(G)\geq mc_k(G)$ holds for $k'\geq k$.

\section{$mc^{nh}_n(G)$ of graphs}

In this section we consider no-hole anti-$k$-labeling for $k=n$.

\begin{obs}\label{obs 5}
If $G^{\prime}$ is a spanning graph of $G$, then $mc^{nh}_n(G^{\prime})\geq mc^{nh}_n(G)$.
\end{obs}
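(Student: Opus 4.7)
The plan is to adapt the argument of Observation~\ref{obs 1} to the no-hole setting. The key point is that, because $G'$ is spanning, $V(G')=V(G)$, so any no-hole anti-$n$-labeling of $G$ can be reused verbatim as a labeling of $G'$ without losing the property that all of $\{1,2,\ldots,n\}$ appear as labels.

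Concretely, I would fix an optimal no-hole anti-$n$-labeling $\psi$ of $G$, so that $w^{nh}_{\psi}(G)=mc^{nh}_{n}(G)$. The first step is to verify that $\psi$, viewed as a function on $V(G')$, is still a no-hole anti-$n$-labeling: its codomain is unchanged, and since $V(G')=V(G)$ every label in $\{1,\ldots,n\}$ is still attained. The second step is to exploit $E(G')\subseteq E(G)$: the minimum of $w^{nh}_{\psi}(e)$ taken over the smaller edge set $E(G')$ is at least the minimum taken over $E(G)$, hence
\[
w^{nh}_{\psi}(G')\;\geq\;w^{nh}_{\psi}(G)\;=\;mc^{nh}_{n}(G).
\]
Finally, since $mc^{nh}_{n}(G')$ is defined as the maximum of $w^{nh}_{\psi'}(G')$ over all no-hole anti-$n$-labelings $\psi'$ of $G'$, the particular choice $\psi'=\psi$ gives $mc^{nh}_{n}(G')\geq mc^{nh}_{n}(G)$.

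There is really no obstacle in the argument; the only subtle issue — and precisely the reason the hypothesis must be that $G'$ is \emph{spanning} rather than just a subgraph — is preserving the no-hole condition. If $G'$ were a proper (non-spanning) subgraph, then the restriction of $\psi$ to $V(G')$ might fail to use every label in $\{1,\ldots,n\}$, so the labeling inherited from $G$ would no longer qualify as a no-hole anti-$n$-labeling of $G'$ and the inequality could not be derived by direct restriction.
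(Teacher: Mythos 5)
Your argument is correct and is essentially the paper's own proof: both take an optimal no-hole anti-$n$-labeling of $G$, reuse it on $G'$ (valid since $V(G')=V(G)$), and use $E(G')\subseteq E(G)$ to conclude $w^{nh}_{\psi}(G')\geq w^{nh}_{\psi}(G)=mc^{nh}_n(G)$. Your explicit remark about why the spanning hypothesis is needed to preserve the no-hole condition is a welcome clarification that the paper leaves implicit.
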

\begin{proof}
Suppose $mc^{nh}_n(G)=l$ with an optimal labeling $\psi$.  Let $w^{nh}_{\psi}(G^{\prime})=w^{nh}_{\psi}(e)$. Then $mc^{nh}_n(G^{\prime})\geq w^{nh}_{\psi}(e)\geq w^{nh}_{\psi}(G)=l$ by the definitions. Therefore, $mc^{nh}_n(G^{\prime})\geq l$.
\end{proof}

\begin{obs}\label{obs 6}
For a  graph $G$ with $n$ vertices, $mc_n(G)\geq mc^{nh}_n(G)$ holds for all $n\geq2$.
\end{obs}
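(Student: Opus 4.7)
The plan is to observe that the no-hole condition is a \emph{restriction} on the class of admissible labelings, not a relaxation, so the maximum of $w_{\psi}(G)$ over no-hole anti-$n$-labelings can only be bounded above by the maximum over all anti-$n$-labelings. The argument is essentially one line, but it requires a brief check that $mc^{nh}_{n}(G)$ is well-defined.

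First I would verify that a no-hole anti-$n$-labeling of $G$ actually exists. Since $|V(G)| = n$ and the label set is $\{1,2,\ldots,n\}$, any bijection $\psi \colon V(G) \to \{1,2,\ldots,n\}$ uses every label exactly once and so qualifies; hence the set of no-hole anti-$n$-labelings is nonempty, and $mc^{nh}_{n}(G)$ is defined.

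Next, let $\psi^{\ast}$ be an optimal no-hole anti-$n$-labeling, so that $w^{nh}_{\psi^{\ast}}(G) = mc^{nh}_{n}(G)$. By definition, $\psi^{\ast}$ is in particular a mapping from $V(G)$ into $\{1,2,\ldots,n\}$, hence an anti-$n$-labeling; moreover the formulas for $w_{\psi}$ and $w^{nh}_{\psi}$ coincide on every edge, so $w_{\psi^{\ast}}(G) = w^{nh}_{\psi^{\ast}}(G) = mc^{nh}_{n}(G)$. Taking the maximum over all anti-$n$-labelings yields $mc_{n}(G) \geq w_{\psi^{\ast}}(G) = mc^{nh}_{n}(G)$, as required.

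There is no real obstacle here: the inequality is a set-theoretic monotonicity fact, since the collection of no-hole anti-$n$-labelings is a subset of the collection of all anti-$n$-labelings. The only subtle point worth flagging is the existence of a no-hole labeling, which relies on the hypothesis $|V(G)| = n$; without this hypothesis (for instance if $|V(G)| < n$) no bijection exists and the statement would require reformulation.
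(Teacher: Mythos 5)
Your proof is correct and matches the paper's approach: the paper simply asserts the inequality as obvious, and you have spelled out the underlying monotonicity argument (every no-hole anti-$n$-labeling is an anti-$n$-labeling, so the maximum over the smaller class is at most the maximum over the larger one), together with the existence check that the paper leaves implicit.
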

\begin{proof}
It is obvious that $mc_n(G)\geq mc^{nh}_n(G)$.
\end{proof}

We denote by $\delta$ and $\Delta$ the minimum degree and maximum degree of a graph $G$. We have the following.
\begin{obs}\label{obs 7}
For a  connected graph $G$ with $n$ vertices,  $mc^{nh}_n(G)\geq 1$ and $mc^{nh}_n(G)\leq\min\{n-\Delta,\lfloor\frac{n-1}{\chi-1}\rfloor,\lfloor\frac{n-\delta+1}{2}\rfloor\}$ hold for all $n\geq2$.
\end{obs}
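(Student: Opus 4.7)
The plan is to prove the lower bound by an easy bijection argument and to dispatch each of the three upper bounds via a single local constraint on the labels available to the neighborhood of each vertex. For $mc^{nh}_n(G)\geq 1$, any no-hole anti-$n$-labeling of an $n$-vertex graph is a bijection from $V(G)$ onto $\{1,\dots,n\}$, so every edge satisfies $|\psi(u)-\psi(v)|\geq 1$; as such bijections obviously exist, the lower bound follows.

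For the upper bounds, fix an optimal no-hole labeling $\psi$ with $m=mc^{nh}_n(G)$. If $\psi(v)=a$, then every neighbor of $v$ carries a label in $\{1,\dots,a-m\}\cup\{a+m,\dots,n\}$, so
\[
d(v)\;\leq\;\max\{a-m,0\}+\max\{n-a-m+1,0\}.
\]
A short case analysis (on whether $a<m$, $m\leq a\leq n-m+1$, or $a>n-m+1$) shows the right-hand side is at most $n-m$ for every $a$. Taking $v$ of degree $\Delta$ yields $\Delta\leq n-m$, i.e.\ $m\leq n-\Delta$. Meanwhile, $m\leq\lfloor(n-1)/(\chi-1)\rfloor$ follows immediately by combining Observation \ref{obs 6} with Theorem \ref{thm 4} at $k=n$. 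For the third bound, since $\psi$ is no-hole and $1\leq m\leq n-1$, some vertex $v_0$ satisfies $\psi(v_0)=m$; substituting $a=m$ in the displayed inequality yields $d(v_0)\leq\max\{n-2m+1,0\}$. Connectivity of $G$ with $n\geq 2$ forces $\delta\geq 1$, so $\max\{n-2m+1,0\}\geq d(v_0)\geq\delta\geq 1$, giving $n-2m+1\geq\delta$ and hence $m\leq\lfloor(n-\delta+1)/2\rfloor$.

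The main obstacle is selecting the right witness vertex for each upper bound: a worst case over all possible labels for the $n-\Delta$ bound, versus the specific ``middle'' label $a=m$ for the $\delta$ bound. In particular, recognizing that the no-hole hypothesis is exactly what guarantees the label $m$ is actually used, so that $v_0$ exists, is the place where the no-hole assumption genuinely enters the argument.
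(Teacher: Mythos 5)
Your proof is correct and follows essentially the same route as the paper: the bijectivity of a no-hole labeling for the lower bound, counting the labels available to the neighborhood of a witness vertex for the $n-\Delta$ and $\lfloor(n-\delta+1)/2\rfloor$ bounds, and Observation~\ref{obs 6} with Theorem~\ref{thm 4} for the $\chi$ bound. The only (harmless) difference is your witness for the third bound: you take the vertex labeled $m$, whereas the paper takes the vertex labeled $\lceil n/2\rceil$; both choices make the same counting argument go through.
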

\begin{proof}
For  each no-hole anti-$n$-labeling $\psi$, $w^{nh}_{\psi}(G)\geq 1$. Thus, $mc^{nh}_n(G)\geq 1$.

Note that the vertex with the maximum degree has $\Delta$ neighbors which have distinct labels for any no-hole anti-$n$-labeling. Then $mc^{nh}_n(G)\leq n-\Delta$.

Let  $v$ be the vertex having label $\lceil \frac{n}{2}\rceil$ for a no-hole anti-$n$-labeling $\psi$ of $G$, then there is an edge $e$ incident to $v$ so that $w^{nh}_{\psi}(e)\leq \lfloor\frac{n-\delta+1}{2}\rfloor$ since there is at least $\delta$ vertices adjacent to $v$ in $G$. Therefore $mc^{nh}_n(G)\leq \lfloor\frac{n-\delta+1}{2}\rfloor$.

It is clear that  $mc^{nh}_n(G)\leq mc_n(G)\leq \lfloor\frac{n-1}{\chi-1}\rfloor$ by Observation \ref{obs 6} and Theorem \ref{thm 4}. Thus, the claim holds.
\end{proof}

Let $G$ be a simple graph. The complement graph $G^{c}$ of $G$ is the simple graph with vertex set $V(G)$, two vertices being adjacent in  $G^{c}$ if and only if they are not adjacent in $G$.

\begin{obs}\label{obs 8}
For a simple graph $G$,   $mc^{nh}_n(G)\geq2$ if and only if there exists a Hamilton path for the complement graph $G^{c}$.
\end{obs}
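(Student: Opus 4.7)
The plan is to observe that since $G$ has exactly $n$ vertices and a no-hole anti-$n$-labeling uses each of the labels $1,2,\ldots,n$ at least once (hence exactly once), any such labeling is in fact a bijection $\psi:V(G)\to\{1,2,\ldots,n\}$. Under this bijection, the condition $mc^{nh}_n(G)\geq 2$ says precisely that no edge of $G$ joins two vertices whose labels differ by exactly $1$. This is the bridge between the labeling condition and a Hamilton path in $G^c$.

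To prove the forward direction, I would assume $mc^{nh}_n(G)\geq 2$ and pick an optimal no-hole anti-$n$-labeling $\psi$. Let $v_i$ denote the unique vertex with $\psi(v_i)=i$ for $i=1,\ldots,n$. For each $i\in\{1,\ldots,n-1\}$, the pair $v_iv_{i+1}$ satisfies $|\psi(v_i)-\psi(v_{i+1})|=1<2\leq mc^{nh}_n(G)$, so $v_iv_{i+1}\notin E(G)$, and therefore $v_iv_{i+1}\in E(G^c)$. Thus $v_1v_2\cdots v_n$ is a Hamilton path in $G^c$.

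For the converse, suppose $G^c$ has a Hamilton path $v_1v_2\cdots v_n$. Define $\psi(v_i)=i$; this is a no-hole anti-$n$-labeling. For any edge $uv\in E(G)$, we have $uv\notin E(G^c)$, so $u$ and $v$ are not consecutive on the Hamilton path, giving $|\psi(u)-\psi(v)|\geq 2$. Hence $w^{nh}_\psi(G)\geq 2$, and $mc^{nh}_n(G)\geq 2$.

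The argument is essentially a direct translation between two combinatorial descriptions of the same object, so there is no real obstacle; the only point that deserves explicit mention at the start is why a no-hole anti-$n$-labeling of an $n$-vertex graph must be a bijection, since this is what makes the correspondence with a vertex ordering (and hence a Hamilton path) well defined.
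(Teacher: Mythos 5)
Your proof is correct and follows essentially the same route as the paper's: read the no-hole anti-$n$-labeling as a bijective vertex ordering, observe that $mc^{nh}_n(G)\geq 2$ forbids exactly the edges $v_iv_{i+1}$ in $G$, and identify these consecutive pairs with the edges of a Hamilton path in $G^c$. Your explicit remark that the labeling is a bijection, and your spelling out of the converse (an edge of $G$ cannot join consecutive vertices of the path), only make explicit what the paper leaves implicit.
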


\begin{proof}
Suppose that $mc^{nh}_n(G)\geq2$ with an optimal no-hole anti-$n$-labeling $\psi$. We may assume without loss of generality that the vertex $v_{i}$ is  labeled  $i$ for the labeling $\psi$. Then $v_{i}v_{i+1}$ ($i=1,2,\ldots,n-1$) does not belong to $E(G)$, that is, $v_{i}v_{i+1}$ ($i=1,2,\ldots,n-1$)  belongs to $E(G^{c})$. Hence the path $v_{1}\rightarrow v_{2}\rightarrow \ldots \rightarrow v_{n}$ is a Hamilton path of $G^{c}$.

Conversely, suppose that $v_{1}\rightarrow v_{2}\rightarrow \ldots \rightarrow v_{n}$ is a Hamilton path of $G^{c}$. We label the vertex $v_{i}$ by $i$. Note that $v_{i}v_{i+1}$ ($i=1,2,\ldots,n-1$) does not belong to $E(G)$. Then $w^{nh}_{\psi}(e)\geq2$ for any edge $e$. Hence $mc^{nh}_n(G)\geq2$.
\end{proof}

By Observation \ref{obs 8}, one can see that the no-hole anti-$n$-labeling number implies some structural properties of graphs.

\subsection{$mc^{nh}_n(G)$ of complete multipartite graphs}

\begin{thm}\label{thm 9}
Let $K_{n_1,\cdots,n_t}$ be a complete multipartite graph with $n$ vertices,  then $mc^{nh}_n(K_{n_1,\cdots,n_t})=1$ holds for all $n\geq2$, where $n_1
+\cdots+n_t=n$.
\end{thm}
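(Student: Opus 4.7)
The plan is to combine the general bound of Observation~\ref{obs 7} with the structural equivalence of Observation~\ref{obs 8}, which together pin $mc^{nh}_n$ to exactly $1$.

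First I would establish the lower bound $mc^{nh}_n(K_{n_1,\ldots,n_t})\geq 1$. Since a complete multipartite graph with $t\geq 2$ parts is connected (every vertex of $V_i$ is adjacent to every vertex of $V_j$ for $i\neq j$), Observation~\ref{obs 7} immediately gives $mc^{nh}_n(K_{n_1,\ldots,n_t})\geq 1$.

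For the matching upper bound, I would invoke Observation~\ref{obs 8}: it suffices to show that the complement graph has no Hamilton path. The key computation is that
\[
(K_{n_1,\ldots,n_t})^{c}=K_{n_1}\cup K_{n_2}\cup\cdots\cup K_{n_t},
\]
because two vertices are adjacent in the complement exactly when they lie in the same part $V_i$. Since $t\geq 2$, this complement has at least two connected components, and therefore cannot contain a Hamilton path. By the contrapositive of Observation~\ref{obs 8} we obtain $mc^{nh}_n(K_{n_1,\ldots,n_t})<2$, i.e.\ $mc^{nh}_n(K_{n_1,\ldots,n_t})\leq 1$.

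There is essentially no obstacle here: the only subtlety is making sure $t\geq 2$ (so that the graph is genuinely multipartite, hence connected, and its complement is genuinely disconnected); for $n\geq 2$ the degenerate case $t=1$ produces the edgeless graph for which the labeling problem is vacuous, so it can be excluded from the statement. Combining the two bounds yields $mc^{nh}_n(K_{n_1,\ldots,n_t})=1$.
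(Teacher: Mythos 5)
Your proposal is correct and follows exactly the paper's argument: the lower bound from Observation~\ref{obs 7} and the upper bound via Observation~\ref{obs 8} applied to the complement $K_{n_1}\cup\cdots\cup K_{n_t}$, which is disconnected and hence has no Hamilton path. Your added remark that $t\geq 2$ must be assumed (so the graph is connected and the complement genuinely disconnected) is a reasonable clarification of a hypothesis the paper leaves implicit.
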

\begin{proof}
 It is clear that $mc^{nh}_n(K_{n_1,\cdots,n_t})\geq 1$ according to Observation \ref{obs 7}.

 Now we prove $mc^{nh}_n(K_{n_1,\cdots,n_t})\leq 1$. Note that $K^{c}_{n_1,\cdots,n_t}=K_{n_1}\cup K_{n_2}\cup \cdots\cup K_{n_t}$ consisting of $t$ disjoint complete graphs. Thus,  there is no    Hamilton paths in $K^{c}_{n_1,\cdots,n_t}$. Then we have $mc^{nh}_n(K_{n_1,\cdots,n_t})<2$ by Observation \ref{obs 8}.
 \end{proof}

We next consider an example for graph operations. Suppose $G_{1}$ and $G_{2}$ are two graphs with disjoint vertex sets. The {\em join} $G$ of $G_{1}$ and $G_{2}$, denoted by $G=G_{1}+G_{2}$, is the graph obtained from $G_{1}\cup G_{2}$  by adding all edges between vertices in  $V(G_{1})$ and vertices in  $V(G_{2})$.

\begin{cor}\label{cor 11}
If $G=G_{1}+G_{2}$, then $mc^{nh}_n(G)=1$.
\end{cor}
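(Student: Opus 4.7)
The plan is to derive Corollary \ref{cor 11} as an immediate consequence of Observation \ref{obs 8} combined with a structural description of the complement of a join. First I would note that the lower bound $mc^{nh}_n(G)\geq 1$ is free: the join $G=G_1+G_2$ is always connected (every vertex in $V(G_1)$ is adjacent to every vertex in $V(G_2)$), so Observation \ref{obs 7} gives $mc^{nh}_n(G)\geq 1$.

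For the matching upper bound $mc^{nh}_n(G)\leq 1$, I would invoke Observation \ref{obs 8}, which reduces the question to showing that $G^c$ has no Hamilton path. The key structural fact is that taking the complement turns a join into a disjoint union: since every pair $(u,v)$ with $u\in V(G_1)$, $v\in V(G_2)$ is an edge of $G$, no such pair is an edge of $G^c$. The remaining edges of $G^c$ lie entirely inside $V(G_1)$ or entirely inside $V(G_2)$, so
\[
G^c \;=\; G_1^{\,c}\,\cup\, G_2^{\,c},
\]
a disjoint union in the sense defined just before Observation \ref{obs 3}. In particular, provided $|V(G_1)|\geq 1$ and $|V(G_2)|\geq 1$, the graph $G^c$ is disconnected and therefore contains no Hamilton path. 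By Observation \ref{obs 8}, this forces $mc^{nh}_n(G)<2$, and combined with the lower bound gives $mc^{nh}_n(G)=1$.

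There is no real obstacle here; the only thing to be careful about is to state the complement identity $(G_1+G_2)^c=G_1^{\,c}\cup G_2^{\,c}$ cleanly, since that is the single fact doing all the work. The proof is essentially two lines once Observation \ref{obs 8} is in hand, and it illustrates nicely how that observation converts a labeling question into a purely structural (Hamiltonicity) question about the complement.
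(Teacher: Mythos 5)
Your proof is correct, but it takes a different formal route from the paper's. The paper observes that the complete bipartite graph $G'$ with bipartition $(V(G_1),V(G_2))$ is a spanning subgraph of $G=G_1+G_2$, so Observation \ref{obs 5} gives $mc^{nh}_n(G)\leq mc^{nh}_n(G')$, and Theorem \ref{thm 9} (the complete multipartite case) gives $mc^{nh}_n(G')=1$. You instead apply Observation \ref{obs 8} directly to $G$, using the complement identity $(G_1+G_2)^c=G_1^{\,c}\cup G_2^{\,c}$ to see that $G^c$ is disconnected and hence has no Hamilton path. The two arguments are close cousins: the paper's Theorem \ref{thm 9} is itself proved by exactly this ``disconnected complement has no Hamilton path'' mechanism (there the complement is a disjoint union of cliques), so you have effectively inlined and specialized that step rather than routing through the spanning-subgraph monotonicity of Observation \ref{obs 5}. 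What your version buys is self-containedness and a cleaner structural statement (the complement of a join is a disjoint union); what the paper's version buys is reuse of an already-established theorem with essentially no new work. Your handling of the lower bound via connectivity of the join and Observation \ref{obs 7} matches what the paper leaves implicit, and your caveat that both $V(G_1)$ and $V(G_2)$ must be nonempty is a reasonable point that the paper's definition of the join already guarantees.
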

\begin{proof}
If $G=G_{1}+G_{2}$, then $G^{\prime}$ is a spanning subgraph of $G$, where  $G^{\prime}$ is a complete bipartite graph with bipartition ($V(G_{1}),V(G_{2})$). Hence $mc^{nh}_n(G)=1$ by Observation~\ref{obs 5} and Theorem~\ref{thm 9}.
\end{proof}

\subsection{$mc^{nh}_n(G)$ of trees}

\begin{thm}\label{thm 12}
Let $P_n$ be a path  on $n$ vertices. Then $mc^{nh}_n(P_n)=\lfloor\frac{n}2\rfloor$.
\end{thm}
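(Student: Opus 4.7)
The plan is to establish the two matching bounds separately. The upper bound $mc^{nh}_n(P_n) \le \lfloor n/2 \rfloor$ falls out of Observation~\ref{obs 7}: the path $P_n$ has $\delta = 1$, so the third entry of the minimum becomes $\lfloor (n-\delta+1)/2 \rfloor = \lfloor n/2 \rfloor$, and this already delivers the inequality with no further work.

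The substantive part is the matching lower bound, which I intend to establish by exhibiting one explicit no-hole anti-$n$-labeling that attains gap $\lfloor n/2 \rfloor$. Writing $P_n = v_1 v_2 \cdots v_n$, the idea is to interleave the ``low half'' $\{1,\ldots,\lceil n/2\rceil\}$ with the ``high half'' $\{\lceil n/2\rceil+1,\ldots,n\}$, both scanned in decreasing order. Concretely, I would set
$$\psi(v_{2j-1}) = \lceil n/2 \rceil + 1 - j, \qquad \psi(v_{2j}) = n + 1 - j,$$
with $j$ ranging over the appropriate values. Each label in $\{1,\ldots,n\}$ then appears exactly once, so $\psi$ is a bona fide no-hole anti-$n$-labeling; to get a feel for it, for $n=6$ this produces the sequence $3,6,2,5,1,4$, and for $n=5$ it produces $3,5,2,4,1$.

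It then remains to verify the consecutive differences along the path. A direct computation gives $|\psi(v_{2j-1}) - \psi(v_{2j})| = \lfloor n/2 \rfloor$ and $|\psi(v_{2j}) - \psi(v_{2j+1})| = \lfloor n/2 \rfloor + 1$, so every edge of $P_n$ has weight at least $\lfloor n/2 \rfloor$, yielding $w^{nh}_\psi(P_n) \ge \lfloor n/2 \rfloor$ as required. Handling the parities $n = 2m$ and $n = 2m+1$ in parallel through the $\lceil n/2 \rceil$ notation avoids case splitting; the only mild subtlety is confirming that the final edge (either $v_{2m-1} v_{2m}$ or $v_{2m} v_{2m+1}$, depending on parity) fits the same formula, which is a one-line check. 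I do not anticipate a serious obstacle here, since the construction is entirely explicit and the verification reduces to arithmetic on the formula above.
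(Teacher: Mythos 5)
Your proposal is correct and follows essentially the same route as the paper: the upper bound via $\delta=1$ in Observation~\ref{obs 7}, and the lower bound via the identical interleaving labeling (your unified formula with $\lceil n/2\rceil$ reproduces the paper's two parity cases exactly, e.g.\ $3,6,2,5,1,4$ for $n=6$). The arithmetic verification of the edge weights $\lfloor n/2\rfloor$ and $\lfloor n/2\rfloor+1$ matches the paper's as well.
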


\begin{proof}
Since $\delta(P_n)=1$, $mc^{nh}_n(P_n)\leq \lfloor\frac{n}2\rfloor$ according to Observation ~\ref{obs 7}.

Let $v_{1},v_{2},\ldots, v_{n}$ be vertices of $P_{n}$ such that $v_{i}$ is adjacent to $v_{i+1}$, $1\leq i\leq n-1$. Now we show that $mc^{nh}_n(P_n)\geq \lfloor\frac{n}2\rfloor$. It suffices to show that there is a no-hole anti-$n$-labeling $\psi$ such that $w^{nh}_{\psi}(P_n)=\lfloor\frac{n}2\rfloor$ for $P_{n}$. Consider the following labeling:

(i) If $n$ is even, then we define
\[\psi(v_{i})=\left\{\begin{array}{ll}
\frac{n}2-\frac{i-1}2&\text{$i$ is odd},\\
n+1-\frac{i}2&\text{$i$ is even}.
\end{array}\right.\]

(ii) If $n$ is odd, then we define
\[\psi(v_{i})=\left\{\begin{array}{ll}
\frac{n+1}2-\frac{i-1}2&\text{$i$ is odd},\\
n+1-\frac{i}2&\text{$i$ is even}.
\end{array}\right.\]

\input{g3.TpX}

Clearly, for each $e\in E(P_{n})$, $w_{\psi}(e)$ is $\frac{n}2$ or $\frac{n}2+1$ for even $n$, and $\frac{n-1}2$ or $\frac{n+1}2$ for odd $n$. Hence $mc^{nh}_n(P_n)\geq \lfloor\frac{n}2\rfloor$.
\end{proof}

We note that a tree is a bipartite graph. A $leaf$ in a tree is a vertex of degree 1.

\begin{lem}\label{lem 13}
For a tree $T_n$ with bipartition $(X_{1},X_{2})$,  and  $|X_{1}|< |X_{2}|$,  then $X_2$ contains a leaf of  $T_n$.
\end{lem}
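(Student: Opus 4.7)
The plan is to proceed by contradiction via a straightforward double-counting of edges across the bipartition. Suppose for the sake of contradiction that every leaf of $T_n$ lies in $X_1$; then every vertex of $X_2$ has degree at least $2$ in $T_n$.

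Next I would exploit the bipartite structure. Since $T_n$ is bipartite with parts $X_1$ and $X_2$, every edge has exactly one endpoint in $X_2$, so
\[
\sum_{v \in X_2} \deg_{T_n}(v) \;=\; |E(T_n)| \;=\; |X_1| + |X_2| - 1,
\]
using that a tree on $n=|X_1|+|X_2|$ vertices has $n-1$ edges. Combining this with the assumption $\deg_{T_n}(v) \geq 2$ for all $v \in X_2$ yields
\[
2|X_2| \;\leq\; \sum_{v \in X_2} \deg_{T_n}(v) \;=\; |X_1| + |X_2| - 1,
\]
which rearranges to $|X_2| \leq |X_1| - 1 < |X_1|$, contradicting the hypothesis $|X_1| < |X_2|$. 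Hence $X_2$ must contain at least one leaf.

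I expect no real obstacle here: the only thing to be careful about is handling the trivial/degenerate cases (for instance $|X_1| = 0$, in which case $T_n$ is a single isolated vertex sitting in $X_2$ and is itself a leaf in the degree-$0$ sense, or $n = 2$, where both endpoints are leaves and the statement is immediate). Assuming $n \geq 2$ and the standard convention that a leaf has degree exactly $1$, the edge count above is the whole argument.
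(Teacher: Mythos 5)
Your proof is correct, and it takes a genuinely different and more elementary route than the paper's. The paper argues by layering the tree according to distance from the set of all leaves (sets $Y_0, Y_1, \ldots, Y_m$), observing that consecutive layers alternate between the two sides of the bipartition, and then comparing the sizes of odd-indexed and even-indexed layers via the claim $|Y_{i+1}|\leq |Y_i|$; the contradiction comes from summing these inequalities. Your argument replaces all of this with a single degree count: since every edge of a bipartite graph has exactly one endpoint in $X_2$, the degrees over $X_2$ sum to $|E(T_n)| = |X_1|+|X_2|-1$, and the assumption that $X_2$ has no leaf forces $2|X_2|\leq |X_1|+|X_2|-1$, contradicting $|X_1|<|X_2|$. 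This is shorter, avoids the somewhat delicate layer-size inequality that the paper asserts without much justification, and in fact yields a quantitative strengthening for free (the same count shows $X_2$ contains at least $|X_2|-|X_1|+1$ leaves). Your handling of the degenerate cases is also appropriate: under the paper's convention that a leaf has degree exactly $1$, the statement only makes sense for $n\geq 2$, and there your edge count is the entire argument.
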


\begin{proof}
By contradiction, suppose that $X_2$ contains no leaves of $T_n$.   Let $Y_{0}=\{u:d(u)=1,u\in V(T_n)\}$, $Y_{i}=\{v:\exists u\in Y_{0}$ so that $d(u,v)=i\}\backslash \cup ^{i-1}_{j=0} Y_{j}$, $m=\max\{i:Y_{i}\neq\emptyset\}$. Note $Y_{i}\in X_{1}$ ($Y_{i}\in X_{2}$, resp.) for even (odd, resp.) $i\leq m$.  And $Y_{i}$ is an independent set for all $i\leq m$. Thus, $|Y_{i+1}|\leq|Y_{i}|$ ($i=0,1,\ldots,m-1$) due to $T_n$ without cycle.

If $m$ is even, then $|X_{2}|=|Y_{1}|+|Y_{3}|+\ldots +|Y_{m-1}|\leq|Y_{0}|+|Y_{2}|+\ldots +|Y_{m-2}|<|Y_{0}|+|Y_{2}|+\ldots +|Y_{m-2}|+|Y_{m}|=|X_{1}|$,  a contradiction. If $m$ is odd, then $|X_{2}|=|Y_{1}|+|Y_{3}|+\ldots +|Y_{m}|\leq|Y_{0}|+|Y_{2}|+\ldots +|Y_{m-1}|=|X_{1}|$, a contradiction. Thus, $X_2$ contains  a leaf of  $T_n$.
\end{proof}

\begin{thm}\label{thm 14}
For a tree $T_n$ with bipartition $(X_{1},X_{2})$, $|X_{i}|=q_{i}$, $i=1,2$, we have $mc^{nh}_n(T_n)\geq q= min\{q_{1},q_{2}\}$.

\end{thm}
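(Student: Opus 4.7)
The plan is to construct an explicit no-hole anti-$n$-labeling of $T_n$ achieving minimum edge weight at least $q$. Assume without loss of generality $q=q_1\le q_2$. I send $X_1$ into the block of labels $\{1,\dots,q_1\}$ and $X_2$ into $\{q_1{+}1,\dots,n\}$. Writing $\sigma(x)=\psi(x)$ for $x\in X_1$ and $\tau(y)=\psi(y)-q_1$ for $y\in X_2$ gives bijections $\sigma\colon X_1\to[q_1]$ and $\tau\colon X_2\to[q_2]$, and the weight bound $|\psi(x)-\psi(y)|\ge q_1$ on every edge becomes the purely combinatorial ordering condition $\sigma(x)\le \tau(y)$ on every edge $xy$. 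Thus it suffices to construct such a pair $(\sigma,\tau)$.

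I would prove the existence of $(\sigma,\tau)$ by induction on $n$. In the unbalanced case $q_1<q_2$, Lemma~\ref{lem 13} produces a leaf $y\in X_2$ with unique neighbor $x\in X_1$; deleting $y$ leaves a tree $T'$ whose bipartition $(X_1,X_2\setminus\{y\})$ has sizes $(q_1,q_2-1)$, still satisfying $q_1\le q_2-1$. The inductive hypothesis on $T'$ yields bijections $\sigma\colon X_1\to[q_1]$ and $\tau'\colon X_2\setminus\{y\}\to[q_2-1]$ obeying the ordering condition, and extending by $\tau(y)=q_2$ preserves it on the new edge $xy$ since $\sigma(x)\le q_1\le q_2=\tau(y)$.

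The main obstacle is the balanced case $q_1=q_2$. A quick counting argument first shows that $X_2$ still contains a leaf: in a bipartite tree $\sum_{y\in X_2}\deg(y)=|E(T_n)|=n-1=2q_1-1$, and if every $\deg(y)\ge 2$ this sum would be at least $2q_1$. Picking such a leaf $y\in X_2$ with neighbor $x\in X_1$ and deleting $y$ gives $T'$ with bipartition of sizes $(q_1,q_1-1)$, so the roles of larger and smaller side swap. Applying the inductive hypothesis to $T'$ produces orderings $\sigma'\colon X_2\setminus\{y\}\to[q_1-1]$ and $\tau'\colon X_1\to[q_1]$ with $\sigma'(y')\le\tau'(x')$, and reversing both via $k\mapsto q_1+1-k$ yields candidate orderings of $T_n$ with $\tau(y)=1$. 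Feasibility on the edge $xy$ then reduces to the single constraint $\sigma(x)=1$, i.e., $\tau'(x)=q_1$ in the $T'$-labeling. Ensuring this can be arranged is the crux of the argument: the natural route is to strengthen the inductive hypothesis so that in the unbalanced case any prescribed leaf of the larger side may always be sent to the top position (which matches the extension step of that case), and then to select the initial leaf $y\in X_2$ along a pendant chain of $T_n$ so that $x$ becomes a leaf of $T'$. When $\deg_{T_n}(x)\ge 3$ this selection fails and one must either choose a different leaf $y\in X_2$ whose neighbor has degree $2$ or carry out an auxiliary restructuring of $T_n$; this is where the remaining delicate work lies.
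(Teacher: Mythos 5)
Your reduction of the inequality to the ordering condition $\sigma(x)\le\tau(y)$ is correct and is essentially the paper's invariant (every vertex of $X_1$ receives a label $\le q$ and every vertex of $X_2$ a label $>q$), and your unbalanced case coincides with the paper's Case 1 via Lemma~\ref{lem 13}. But the balanced case $q_1=q_2$ is left genuinely open: after deleting a single leaf $y\in X_2$ the parts swap roles, and your reversal trick forces the edge constraint $\sigma(x)\le\tau(y)=1$, i.e.\ the neighbour $x$ of the deleted leaf must occupy the extreme position $\tau'(x)=q_1$ in the labeling of $T'$. The plain inductive hypothesis gives you no control over which vertex lands there; your proposed fixes (strengthen the hypothesis so a \emph{prescribed leaf} of the larger side can be sent to the top, or choose $y$ so that $x$ becomes a leaf of $T'$) fail exactly when $\deg_{T_n}(x)\ge 3$, as you yourself note, and the ``auxiliary restructuring'' is not specified. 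So as written the induction does not close. (The difficulty is not insurmountable --- the top label of the larger side is in fact unconstrained, so a strengthened hypothesis ``any prescribed vertex of the larger part may receive the extreme label'' would do --- but that stronger statement would itself have to be carried through both cases of the induction, and you have not done so.)

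The paper sidesteps this entirely by using a different decomposition in the balanced case: it takes a vertex $u$ all of whose neighbours except one are leaves (a penultimate vertex of a longest path), assumes $u\in X_2$ by the symmetry $q_1=q_2$, and deletes $u$ \emph{together with its $m\ge 1$ pendant leaves}. The resulting tree has part sizes $(\tfrac n2-m,\tfrac n2-1)$ with the same side still (weakly) smaller, so no role reversal occurs; one then shifts the labels above $\tfrac n2-m$ up by $m$, gives $u$ the label $n$ and the $m$ deleted leaves the labels $\tfrac n2-m+1,\dots,\tfrac n2$, and all new edges are incident to $u$ whose neighbours carry labels $\le\tfrac n2$. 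You would either need to adopt a decomposition of this kind or actually prove the strengthened inductive statement to complete your argument.
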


\begin{proof}
Without loss of generality, we suppose $q_{1}\leq q_{2}$, i.e.,  $q=q_{1}$. We   show that  $mc^{nh}_n(T_n)\geq q$ by giving a no-hole anti-$n$-labeling $\psi$ with $w^{nh}_\psi(T_n)\geq q$. The result clearly holds for  $n=1,2$. If $n=3$, then $T_3=P_3$.  Let  $T_3=P_3=v_{1}v_{2}v_{3}$. Then $v_{2}\in X_{1}$ and $v_{1}, v_{3}\in X_{2}$.  We have $\psi(v_{1})=3$, $\psi(v_{2})=1$, and $\psi(v_{3})=2$, and $w^{nh}_\psi(T_3)\geq 1=q$ according to Theorem \ref{thm 12}. Hence, $mc^{nh}_n(T_n)\geq q$ for $n=3$.  Moreover,  each vertex of $X_{1}$ ($X_{2}$, resp.) is labeled by $i\leq q$ ($i>q$, resp.) in the labeling $\psi$.

We next construct a  no-hole anti-$n$-labeling $\psi$ of $T_n$ by induction on $n\geq 4$ such that each vertex of $X_{1}$ ($X_{2}$, resp.) is labeled by $i\leq q$ ($i>q$, resp.). We assume that  $\psi^{\prime}$ is a no-hole anti-$k$-labeling of $T_k$ satisfying the requirement for $k<n$. We label $T_n$ based on the labeling $\psi^{\prime}$ of $T_k$ as below.

 {\bf Case 1.}  $q_{1}<q_{2}$.

By Lemma \ref{lem 13},  there exists a  leaf $u\in X_{2}$  of $T_n$.   Let $T_{n-1}=T_n-u$.  Clearly, $|X_{1}(T_{n-1})|=|X_{1}(T_{n})|=q_{1}=q$, $|X_{2}(T_{n-1})|=|X_{2}(T_{n})|-1=q_{2}-1$ and $q_{1}\leq q_{2}-1$. By the induction hypothesis, there exists a  no-hole anti-$(n-1)$-labeling  $\psi^{\prime}$  so that $w^{nh}_{\psi^{\prime}}(T_{n-1})\geq q$. We obtain the labeling $\psi$ by labeling the vertex $u$ by $n$. It is obvious that $w^{nh}_\psi(T_n)\geq q$, and  each vertex of $X_{1}$ ($X_{2}$, resp.) is labeled by $i\leq q$ ($i>q$, resp.) in the labeling $\psi$ (see Fig.2(2)).

 {\bf Case 2.}  $q_{1}=q_{2}=q=\frac{n}{2}$.

Clearly, there is a vertex (say $u$) whose neighbors are all leaves except one vertex. Without loss of generality, we assume $u\in V(X_{2})$ and $u$ has $m$ leaves as its neighbors.  We consider the graph $T_{n-m-1}$  obtained from $T_{n}$ by removing the vertex $u$ and the $m$ neighbors (the $m$ leaves) of $u$. Note $|X_{1}(T_{n-m-1})|=|X_{1}(T_{n})|-m=\frac{n}{2}-m$,
$|X_{2}(T_{n-m-1})|=|X_{2}(T_{n})|-1=\frac{n}{2}-1$.
 By the induction hypothesis, there exists a  no-hole anti-$(n-m-1)$-labeling $\psi^{\prime}$  so that $w^{nh}_{\psi^{\prime}}(T_{n-m-1})\geq \frac{n}{2}-m$.

We now label  $T_{n}$ by the following rules (i.e., $\psi$): relabel  the vertex with label   $i>\frac{n}{2}-m$  in $T_{n-m-1}$ by $i+m$, label the vertex $u$  by $n$, and  label the $m$ neighbors of $u$ by $\frac{n}{2}-m+1,\frac{n}{2}-m+2,\cdots,\frac{n}{2}$. Clearly,  $\psi(v)\leq \frac{n}{2}$ ($\psi(v)>\frac{n}{2}$, resp.) for $v\in X_{1}$ ($v\in X_{2}$, resp.) in the  labeling $\psi$ of $T_n$.

Next we show $w^{nh}_\psi(T_n)\geq q$, i.e.,  $w^{nh}_\psi(e)\geq q$ for all $e=uv\in E(T_n)$ in the no-hole anti-$n$-labeling $\psi$.  If $e\in E(T_{n-m-1})$, then $w^{nh}_\psi(T_{n})\geq \frac{n}{2}$ since $w^{nh}_{\psi^{\prime}}(T_{n-m-1})\geq \frac{n}{2}-m$ by the induction hypothesis and $w^{nh}_\psi(e)\geq |\psi(u)-\psi(v)|=|\psi^{\prime}(u)-\psi^{\prime}(v)|+m$. If $e\notin E(T_{n-m-1})$, then $e$ is incident to $u$. Note that $u$ is labeled by $n$ and its neighboring vertices are labeled by some integer $i\leq \frac{n}{2}$. We have $w^{nh}_\psi(e)\geq \frac{n}{2}$. Hence,  $mc^{nh}_n(T_n)\geq q$(see Fig.2(3)).
\end{proof}

\begin{rem}
For an arbitrary  bipartition $(X_{1},X_{2})$, $|X_{1}|=q_{1}\leq |X_{2}|=q_{2}$, there is a tree $T_n$ such that $mc^{nh}_n(T_n)=q_{1}$.  We consider the tree $T_{n}$ with  bipartition $(X_{1},X_{2})$ as following: $T_{n}$  is obtained by joining $q_{1}-1$ new vertices to vertex of degree 1 in the star graph $K_{1,q_{2}}$. Since $\Delta(T_{n})=q_{2}$, then $mc^{nh}_n(T_n)\leq n-q_{2}=q_{1}$ by Observation \ref{obs 7}. Therefore, $mc^{nh}_n(T_n)=q_{1}$ by Theorem~\ref{thm 14}.
\end{rem}

We also pose a conjecture below.

\begin{con}\label{con 15}
 For a tree $T_n$ with bipartition $(X_{1},X_{2})$, $X_{i}=q_{i}$, $i=1,2$, we have $mc^{nh}_n(T_n)=q$, where $q=min\{q_{1},q_{2}\}$.
\end{con}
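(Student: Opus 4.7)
Since Theorem~\ref{thm 14} supplies the lower bound $mc^{nh}_n(T_n)\ge q$, my plan is to establish the matching upper bound $mc^{nh}_n(T_n)\le q$ by induction on $n$. Without loss of generality assume $q_1\le q_2$, so $q=q_1$. The base cases $n\le 3$ are paths and are handled by Theorem~\ref{thm 12}.

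For the inductive step, suppose for contradiction that $T_n$ admits a no-hole anti-$n$-labeling $\psi$ with $w^{nh}_\psi(T_n)\ge q+1$, and let $v^{(1)}=\psi^{-1}(1)$, $v^{(n)}=\psi^{-1}(n)$. If $v^{(1)}$ is a leaf of $T_n$, then $T'=T_n-v^{(1)}$ is a tree on $n-1$ vertices and $\psi'(v)=\psi(v)-1$ is a no-hole anti-$(n-1)$-labeling of $T'$ with $w^{nh}_{\psi'}(T')\ge q+1$. Since the smaller side of the bipartition of $T'$ has size $q$ or $q-1$, the inductive hypothesis yields $mc^{nh}_{n-1}(T')\le q$, a contradiction. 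The case that $v^{(n)}$ is a leaf is symmetric, by replacing $\psi$ with $n+1-\psi$. So we may assume neither $v^{(1)}$ nor $v^{(n)}$ is a leaf.

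In this remaining case both $v^{(1)}$ and $v^{(n)}$ have degree $\ge 2$; every neighbor of $v^{(1)}$ carries a label in $\{q+2,\ldots,n\}$, and every neighbor of $v^{(n)}$ carries a label in $\{1,\ldots,n-q-1\}$. Moreover, each of $\psi^{-1}(\{1,\ldots,q+1\})$ and $\psi^{-1}(\{n-q,\ldots,n\})$ is an independent set of size $q+1>q_1$, and so contains at least one vertex of $X_2$. My plan is to combine these constraints with Lemma~\ref{lem 13} (which, when $q_1<q_2$, supplies a leaf $u\in X_2$): swap $\psi(u)$ with $1$ (or with $n$) and check that $w\ge q+1$ is preserved, thereby forcing some leaf to receive label $1$ or $n$ and returning the argument to the previous paragraph.

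The main obstacle is precisely this swap. A naive label interchange destroys $w\ge q+1$ on edges incident to the former holder of $1$ (or $n$) unless its neighbors carry labels sufficiently far from $\psi(u)$, and a leaf with a conveniently placed neighbor need not exist. I expect a careful argument to split on whether some $X_1$-vertex attains degree $q_2$ (in which case Observation~\ref{obs 7} already gives $mc^{nh}_n(T_n)\le n-q_2=q_1$) versus the harder case $\Delta(T_n)<q_2$, where a refined discrepancy argument on the distribution of $X_1$- and $X_2$-labels within intervals of length $q+1$ appears necessary. A prudent first step before committing to this plan is to test the conjecture on small unbalanced trees with $\Delta<q_2$: small seven-vertex ``H-shaped'' caterpillars with bipartition sizes $(2,5)$ already warrant careful scrutiny, since it is not a priori clear that the bound $q_1$ is correct without some additional structural hypothesis, and if the conjecture fails in such examples then the inductive swap cannot succeed in general.
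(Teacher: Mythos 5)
The statement you are trying to prove is Conjecture~\ref{con 15}; the paper offers no proof of it and only establishes the lower bound $mc^{nh}_n(T_n)\ge q$ (Theorem~\ref{thm 14}), so there is no ``paper proof'' to match your upper-bound argument against. Your proposal is in any case not a proof but a plan: the leaf-removal step for the case where $\psi^{-1}(1)$ or $\psi^{-1}(n)$ is a leaf is sound, but the central ``swap'' step is left open, and you concede as much.

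The decisive problem is that the conjecture is false, and the very example you flag for ``careful scrutiny'' refutes it. Take the $7$-vertex H-shaped caterpillar $T_7$ with vertices $u,w$ of degree $3$, a middle vertex $v$ adjacent to both $u$ and $w$, leaves $\ell_1,\ell_2$ adjacent to $u$, and leaves $\ell_3,\ell_4$ adjacent to $w$. Its bipartition is $X_1=\{u,w\}$, $X_2=\{v,\ell_1,\ell_2,\ell_3,\ell_4\}$, so $q=q_1=2$. The no-hole labeling $\psi(u)=1$, $\psi(w)=7$, $\psi(v)=4$, $\psi(\ell_1)=5$, $\psi(\ell_2)=6$, $\psi(\ell_3)=2$, $\psi(\ell_4)=3$ gives edge differences $|1-4|=3$, $|7-4|=3$, $|1-5|=4$, $|1-6|=5$, $|7-2|=5$, $|7-3|=4$, so $w^{nh}_\psi(T_7)=3$ and hence $mc^{nh}_7(T_7)\ge 3>2=q$. (This is consistent with Observation~\ref{obs 7}, whose bounds here are $n-\Delta=4$ and $\lfloor\frac{n-\delta+1}{2}\rfloor=3$.) Consequently no inductive scheme can deliver the upper bound $mc^{nh}_n(T_n)\le q$ in general: your planned swap must fail on this tree, and the correct course of action is the one you hinted at in your last sentence---test the small case, exhibit the counterexample, and report that the conjecture does not hold as stated.
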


\input{g6.TpX}

\subsection{$mc^{nh}_{mn}(G)$ of 2-Dimensional grids $P_{m}\times P_{n}$}
In the Subsection, we generalize the result on paths to 2-Dimensional grids.
\begin{obs}\label{obs 16}
Let $G$ is a 2-Dimensional grid $P_{m}\times P_{n}$, then $mc^{nh}_{mn}(G)\geq\lfloor\frac{mn-m}{2}\rfloor$, where $m=\min\{m,n\}$.
\end{obs}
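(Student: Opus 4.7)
Without loss of generality, assume $m = \min\{m,n\}$, so the grid $G = P_m \times P_n$ has $m$ rows and $n \geq m$ columns; write $N = mn$ and $q = \lfloor (N-m)/2 \rfloor$. The plan is to exhibit a single explicit no-hole anti-$N$-labeling $\psi$ and verify $w^{nh}_\psi(e) \geq q$ for every edge $e$ of $G$.

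The construction exploits the natural bipartition $V(G) = A \cup B$ with $A = \{(i,j) : i+j \text{ even}\}$ and $B = \{(i,j) : i+j \text{ odd}\}$. I would assign the lower labels $\{1, 2, \ldots, |A|\}$ to vertices of $A$ and the upper labels $\{|A|+1, \ldots, N\}$ to $B$; inside each colour class the vertices are enumerated column by column (columns $1$ through $n$), and within each column from the top row down. Set $a_j$ and $b_j$ to be the cumulative counts of $A$- and $B$-vertices in columns $1$ through $j$, and let $r_A(i,j)$ (resp.\ $r_B(i,j)$) denote the in-column rank. The formula is then $\psi(i,j) = a_{j-1} + r_A(i,j)$ for $(i,j) \in A$ and $\psi(i,j) = |A| + b_{j-1} + r_B(i,j)$ for $(i,j) \in B$, which is clearly a no-hole anti-$N$-labeling.

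The verification splits by edge type. For a vertical edge $(i,j)-(i+1,j)$, both endpoints lie in the same column, so the $a_{j-1}$ and $b_{j-1}$ terms are identical and the gap is $|A| + (r_B - r_A)$; since $|r_B - r_A|$ is at most $1$ from the column's own top-to-bottom enumeration, the gap is at least $|A|-1$, which comfortably exceeds $q$. For a horizontal edge $(i,j)-(i,j+1)$ the key observation is that the two endpoints share the same row, and a short check of the four $(j\bmod 2, i\bmod 2)$ subcases shows that $r_A$ on one side equals $r_B$ on the other. The gap thus collapses to a purely column-level expression of the form $|A| + b_{j} - a_{j-1}$ (or the analogous one with the roles of columns $j$ and $j+1$ swapped), and a direct computation using $a_j, b_j$ shows this equals either $\lceil (N+m)/2 \rceil$ or $\lfloor (N-m)/2\rfloor = q$, according to whether the $A$-column lies to the right or to the left of the $B$-column.

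I expect the main obstacle to be the parity bookkeeping when $m$ is odd: then $a_j - b_j$ alternates between $\pm\tfrac{1}{2}(m\pm 1)$ as $j$ increases, and the in-column ranks $r_A, r_B$ depend jointly on the parities of $i$ and $j$. Verifying that in every such subcase the gap equals one of the two values $\lceil (N+m)/2 \rceil$ or $\lfloor (N-m)/2 \rfloor$ is the bulk of the argument, and confirms $mc^{nh}_{mn}(G) \geq q$ with equality attained by the "outgoing" horizontal edges in the interior columns. Small cases such as $P_3\times P_3$ and $P_4 \times P_4$ provide a sanity check that both bounds coincide with $q$.
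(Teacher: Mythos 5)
Your construction is the same as the paper's: the chessboard bipartition with the low labels on one colour class and the high labels on the other, each class enumerated in scan order along the length-$m$ direction, and the verification reduces to the same two gap values $\lfloor\frac{mn\pm m}{2}\rfloor$ on the cross-column edges. One small slip: for odd $m$ the cumulative counts $a_{j-1}$ and $b_{j-1}$ are not identical (they can differ by $1$), so the vertical-edge gap is only $\geq |A|-2$ rather than $\geq |A|-1$, but this still meets the bound $\lfloor\frac{mn-m}{2}\rfloor$ for every $m$, so the argument survives.
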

\begin{proof}
We look the $P_{m}\times P_{n}$ grid (i.e., $m$ rows and $n$ columns) as a chessboard. Like in the chessboard, we have white and black alternating squares (see Fig.3).

(i) If at least one of $m$ and $n$ is even (i.e., $mn$ is even), we have in the ``white'' squares the labels from the range $[1,\frac{mn}{2}]$ and in the ``black'' squares the labels from the range $[\frac{mn}{2}+1,mn]$. Without loss of generality, we assume that the left upper square is white. Take the following labeling $\psi$: put 1 in the left upper corner (put $\frac{mn}{2}+1$ in the second square in the first row of grid, resp.) and subsequently put in the white (black) squares from left to right and row by row the upper range labels: 2,3,$\ldots$,$\frac{mn}{2}$ ($\frac{mn}{2}+2,\frac{mn}{2}+3,\ldots,mn$, resp.).

Let $v$ be labelled by $i$, $i\leq\frac{mn}{2}$ ($i>\frac{mn}{2}$, resp.), then the vertices adjacent to $v$ are labelled by $i+\frac{mn}{2},i+\frac{mn}{2}-1,i+\lfloor\frac{mn-m}{2}\rfloor,i+\lfloor\frac{mn+m}{2}\rfloor$ ($i-\frac{mn}{2},i-\frac{mn}{2}+1,i-\lfloor\frac{mn-m}{2}\rfloor,i-\lfloor\frac{mn+m}{2}\rfloor$, resp.). Hence, $mc^{nh}_{mn}(G)\geq\lfloor\frac{mn-m}{2}\rfloor$ (see Fig.3(1)).

(ii) If $m$ and $n$ are odd (i.e., $mn$ is odd), we have in the ``white'' squares the labels from the range $[1,\frac{mn+1}{2}]$ and in the ``black'' squares the labels from the range $[\frac{mn+1}{2}+1,mn]$. Take the following labeling $\psi$: put 1 in the left upper corner (put $\frac{mn+1}{2}+1$ in the second square in the first row of grid, resp.) and subsequently put in the white (black, resp.) squares from left to right and row by row the upper range labels: 2,3,$\ldots$,$\frac{mn+1}{2}$ ($\frac{mn+1}{2}+2,\frac{mn+1}{2}+3,\ldots,mn$, resp). We have $mc^{nh}_{mn}(G)\geq\frac{mn-m}{2}$ by the argument of (i) (see Fig.3(2)).
\end{proof}

\input{g5.TpX}

\begin{con}\label{con 17}
Let $G$ is a 2-Dimensional grid $P_{m}\times P_{n}$, then $mc^{nh}_{mn}(G)=\lfloor\frac{mn-m}{2}\rfloor$, where $m=\min\{m,n\}$.
\end{con}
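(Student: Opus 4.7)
The lower bound in Conjecture~\ref{con 17} is already provided by Observation~\ref{obs 16}, so the plan focuses on the upper bound $mc^{nh}_{mn}(G)\leq\lfloor(mn-m)/2\rfloor$ for $G=P_m\times P_n$ with $m\leq n$. I would proceed by contradiction: suppose a no-hole anti-$mn$-labeling $\psi$ exists with $w_\psi(G)\geq t+1$, where $t=\lfloor(mn-m)/2\rfloor$, and set $v_i=\psi^{-1}(i)$. The hypothesis forces each ``sliding window'' $I_i=\{v_i,v_{i+1},\ldots,v_{i+t}\}$ (for $i=1,2,\ldots,mn-t$) to be an independent set of size $t+1$ in $G$, and the transition $I_i\to I_{i+1}$ deletes $v_i$ and inserts $v_{i+t+1}$.

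The central step is a structural lemma on near-maximum independent sets in $P_m\times P_n$. Writing $(X_1,X_2)$ for the chessboard bipartition with $|X_1|=\lceil mn/2\rceil\geq|X_2|$, I would show that any independent set of size $t+1$ must be ``almost monochromatic'', differing from $X_1$ (or from $X_2$) by at most $\lfloor m/2\rfloor$ vertices; this matches the reason the lower bound in Observation~\ref{obs 16} is attained precisely by a chessboard-style labeling. With this lemma, I would track $|I_i\cap X_1|$ through all $mn-t$ windows: each swap alters this count by at most one, while the structural lemma pins the feasible values to a narrow band. Pushing this count against the bipartite structure of $G$ eventually forces some inserted $v_{i+t+1}$ to lie in the neighborhood of $I_i\setminus\{v_i\}$, contradicting the independence of $I_{i+1}$.

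A complementary attack uses edge-isoperimetric inequalities on the grid. For $k$ close to $\lceil mn/2\rceil$ and $S_k=\psi^{-1}([1,k])$, the isoperimetric profile of $P_m\times P_n$ yields $|\partial S_k|\geq m$ (the minimum cut across the short side). On the other hand, since each cut edge has label-gap at least $t+1$, a direct count of admissible label pairs $(a,b)$ with $a\leq k<b\leq mn$ and $b-a\geq t+1$ produces an upper bound on $|\partial S_k|$ that drops strictly below $m$ exactly when $t+1>\lfloor(mn-m)/2\rfloor$, yielding the required contradiction.

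The main obstacle is calibrating either argument exactly at this threshold. The factor $m$ in the numerator $mn-m$ reflects a minimum-cut / one-column-wide slab phenomenon, but the parity cases (odd versus even $m$, odd versus even $n$) introduce additive offsets of one or two in both the structural lemma and the label count, which must be absorbed carefully. An induction on $n$, with base cases $n\in\{m,m+1\}$ handled directly and the inductive step removing the column containing the vertex labeled $1$ or $mn$, is a plausible vehicle for the residual cases. The fact that the authors record this statement only as a conjecture strongly suggests that this final calibration is genuinely delicate.
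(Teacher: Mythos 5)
This statement is recorded in the paper only as a conjecture: the authors prove the lower bound $mc^{nh}_{mn}(P_m\times P_n)\geq\lfloor\frac{mn-m}{2}\rfloor$ in Observation~\ref{obs 16} and leave the matching upper bound open. Your proposal correctly identifies that the missing half is the upper bound, but it does not prove it; what you have written is a research plan whose two key steps are both unestablished. The ``structural lemma'' that every independent set of size $t+1=\lfloor\frac{mn-m}{2}\rfloor+1$ in the grid is within $\lfloor m/2\rfloor$ of one colour class of the chessboard bipartition is asserted, not proved, and its precise form is not even pinned down (symmetric difference? containment up to $\lfloor m/2\rfloor$ exceptions?); since these sets can sit several vertices below the independence number $\lceil mn/2\rceil$, a stability statement of this kind needs a real argument. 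The subsequent ``tracking $|I_i\cap X_1|$'' step depends entirely on that lemma, so the first attack is not a proof.

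The second attack has a concrete flaw as stated. For $S_k=\psi^{-1}([1,k])$ with $k$ near $\lceil mn/2\rceil$, the number of label pairs $(a,b)$ with $a\leq k<b\leq mn$ and $b-a\geq t+1$ is on the order of $m\cdot mn/4$, which is far larger than $m$; counting admissible pairs therefore cannot force $|\partial S_k|<m$ and cannot contradict the isoperimetric lower bound. Any workable version would have to exploit the bounded degree of the grid and the injectivity of $\psi$ much more carefully, and you have not done so. Likewise, the proposed induction on $n$ by deleting the column containing the vertex labelled $1$ or $mn$ does not preserve the no-hole property of the restricted labeling, so the inductive hypothesis does not apply to the smaller grid. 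In short: the lower bound you cite is exactly the paper's Observation~\ref{obs 16}, and the upper bound --- the actual content of the conjecture --- remains unproved both in the paper and in your proposal.
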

\subsection{$mc^{nh}_{2^{n}}(G)$ of $n$-cubes}

\begin{thm}\label{obs 18}
For a cycle $C_n$ of length $n$, $mc^{nh}_n(C_n)=\lfloor\frac{n-1}2\rfloor$.
\end{thm}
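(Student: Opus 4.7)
The plan is to get the upper bound immediately from Observation~\ref{obs 7} and then settle the lower bound by an explicit construction. Since $C_n$ is $2$-regular, $\delta = \Delta = 2$, and Observation~\ref{obs 7} yields $mc^{nh}_n(C_n) \leq \lfloor \frac{n-\delta+1}{2}\rfloor = \lfloor \frac{n-1}{2}\rfloor$ (the other two terms in the minimum, namely $n-2$ and $\lfloor\frac{n-1}{\chi-1}\rfloor$, are at least $\lfloor\frac{n-1}{2}\rfloor$ for $n \geq 3$).

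For the lower bound, I would write $C_n = v_1 v_2 \cdots v_n v_1$ and exhibit a no-hole anti-$n$-labeling $\psi$ of weight $\lfloor\frac{n-1}{2}\rfloor$ by interleaving a ``low'' block with a ``high'' block, in the spirit of the path construction in Theorem~\ref{thm 12}. Concretely, for even $n$ set
\[
\psi(v_i)=\begin{cases}\tfrac{i+1}{2} & i\text{ odd},\\ \tfrac{n}{2}+\tfrac{i}{2} & i\text{ even},\end{cases}
\]
and for odd $n$ set
\[
\psi(v_i)=\begin{cases}\tfrac{i+1}{2} & i\text{ odd},\\ \tfrac{n+1}{2}+\tfrac{i}{2} & i\text{ even}.\end{cases}
\]
In both cases $\psi$ is a bijection onto $\{1,\ldots,n\}$, so it is a no-hole anti-$n$-labeling.

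The main step is then to check that $w^{nh}_{\psi}(e)\geq \lfloor\frac{n-1}{2}\rfloor$ on every edge. The non-wraparound edges $v_iv_{i+1}$ split into two alternating types: when $i$ is odd the difference equals $\lceil n/2\rceil$, and when $i$ is even it equals $\lceil n/2\rceil -1 = \lfloor\frac{n-1}{2}\rfloor$, which already meets the target. The crucial case is the wraparound edge $v_nv_1$. For even $n$ this edge contributes $|n-1|=n-1$, which is harmless. For odd $n$ the construction places both $v_n$ and $v_1$ in the ``low'' half, with labels $\frac{n+1}{2}$ and $1$ respectively, yielding difference exactly $\frac{n-1}{2} = \lfloor\frac{n-1}{2}\rfloor$.

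The main obstacle is this last case: since $\chi(C_n)=3$ for odd $n$, one cannot keep the two halves of labels on independent sets as in the path proof, and the wraparound forces one adjacent pair to lie in the same half. The construction finesses this by placing $1$ and $\frac{n+1}{2}$ at the two ``low-half'' endpoints, so the forced in-half pair attains exactly the extremal gap $\lfloor\frac{n-1}{2}\rfloor$ rather than a smaller one. Combining the two bounds gives $mc^{nh}_n(C_n)=\lfloor\frac{n-1}{2}\rfloor$.
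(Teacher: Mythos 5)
Your proof is correct and follows essentially the same route as the paper: the upper bound comes from Observation~\ref{obs 7} with $\delta=2$, and the lower bound from an explicit no-hole labeling that alternates a low block and a high block around the cycle, with the only delicate point being the wraparound edge in the odd case (your formulas differ slightly from the paper's but the construction and verification are of the same kind). No gaps.
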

\begin{proof}
Since $\delta(C_n)=2$, $mc^{nh}_n(C_n)\leq \lfloor\frac{n-1}2\rfloor$ according to Observation ~\ref{obs 7}.

\input{g4.TpX}

Now we  show that $mc^{nh}_n(C_n)\geq \lfloor\frac{n-1}2\rfloor$. It suffices to show that there is a labeling $\psi$ such that $w^{nh}_{\psi}(C_n)=\lfloor\frac{n-1}2\rfloor$. Let $v_{1},v_{2},\ldots, v_{n}$ be the vertices of $C_{n}$ such that $v_{i}$ is adjacent to $v_{(i+1)\mod n}$, $1\leq i\leq n$.
 Consider the following labeling:

(i)If $n$ is even, then we define
\[\psi(v_{i})=\left\{\begin{array}{ll}
1&\text{$i$ is 1},\\
n&\text{$i$ is 3},\\
\frac{i-1}2&\text{$i$ is odd and $i\neq1,3$},\\
\frac{n}2-1+\frac{i}2&\text{$i$ is even}.
\end{array}\right.\]

(ii)If $n$ is odd, then we define
\[\psi(v_{i})=\left\{\begin{array}{ll}
\frac{n+1}2-\frac{i-1}2&\text{$i$ is odd},\\
n+1-\frac{i}2&\text{$i$ is even}.
\end{array}\right.\]

It is easy to show that $w^{nh}_{\psi}(e)$, $e\in E(C_{n})$, defined above is $\frac{n}2$ or $\frac{n}2-1$ for even $n$, and $\frac{n-1}2$ or $\frac{n+1}2$ for odd $n$. Hence $mc^{nh}_n(C_n)\geq \lfloor\frac{n-1}2\rfloor$.
\end{proof}

An $n$-cube can be defined inductively as follows. An $1$-cube is a $P_{2}$. An $n$-cube $Q_n$ may be constructed from the disjoint union of two $(n-1)$-cubes $Q_{n-1}$, by adding an edge from each vertex in one copy of $Q_{n-1}$ to the corresponding vertex in the other copy. The joining edges form a perfect matching.

\begin{thm}\label{thm 19}
Let $Q_n$ be an $n$-cube. Then, for all $n\geq2$, $mc^{nh}_{2^{n}}(Q_n)\geq 2^{n-2}$.
\end{thm}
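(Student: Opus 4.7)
The plan is to exhibit an explicit no-hole labeling of $Q_n$ whose minimum adjacent label-difference is at least $2^{n-2}$. Use the weight-parity bipartition $V(Q_n)=A\sqcup B$ (here $A$ = vertices of even Hamming weight, $B$ = odd), and refine each part by the value of the last coordinate: $A_c=\{v\in A:v_n=c\}$ and $B_c=\{v\in B:v_n=c\}$ for $c\in\{0,1\}$. Each of $A_0,A_1,B_0,B_1$ has size $M:=2^{n-2}$. The labeling $\psi$ will place the four parts in four consecutive blocks of length $M$, arranged as $A_0\to[1,M]$, $A_1\to[M+1,2M]$, $B_0\to[2M+1,3M]$, $B_1\to[3M+1,2^n]$, with the bijections inside each block to be specified in the last step.

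Each edge of $Q_n$ flips exactly one coordinate, which sorts the edge set into four classes: (i) internal $Q^0$-edges, with endpoints in $A_0$ and $B_0$; (ii) internal $Q^1$-edges, with endpoints in $A_1$ and $B_1$; (iii) matching edges $(w,0)$-$(w,1)$ for which $w\in V(Q_{n-1})$ has even weight (endpoints in $A_0$ and $B_1$); (iv) matching edges for which $w$ has odd weight (endpoints in $A_1$ and $B_0$). In classes (i), (ii), and (iii) the two blocks involved are separated by at least $M+1$ labels, so the label-difference is automatically $\geq M+1>2^{n-2}$, irrespective of the bijections chosen within the blocks.

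The only delicate case is class (iv): the blocks $[M+1,2M]$ and $[2M+1,3M]$ occupied by $A_1$ and $B_0$ are adjacent, so a priori the difference across a matching edge could be as small as $1$. The key step is to align the two bijections. Let $\lambda$ be any bijection from the odd-weight vertex set of $Q_{n-1}$ onto $[1,M]$, and set
\[
\psi((w,1))=M+\lambda(w)\quad\text{for }(w,1)\in A_1,\qquad \psi((w,0))=2M+\lambda(w)\quad\text{for }(w,0)\in B_0.
\]
Every type-(iv) matching edge then has label-difference exactly $|M+\lambda(w)-(2M+\lambda(w))|=M=2^{n-2}$. The labelings on $A_0$ and $B_1$ may be arbitrary bijections onto their respective blocks. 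The four blocks together give a no-hole labeling, and the four case-checks show $w^{nh}_{\psi}(Q_n)\geq 2^{n-2}$, as desired. A sanity check at $n=2$ (so $M=1$, $Q_2=C_4$) recovers exactly the bound $mc^{nh}_4(C_4)\geq 1$ from Theorem~\ref{obs 18}.

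The main obstacle is identifying the right refined partition and the right block-order. A direct two-block split by the bipartition alone does not suffice, because the two boundary labels $2^{n-1}$ and $2^{n-1}+1$ can sit on adjacent vertices, collapsing the bound to $1$. Splitting further by $v_n$ and ordering the four blocks so that the same-subcube pairs $\{A_0,B_0\}$ and $\{A_1,B_1\}$ each lie in non-adjacent blocks concentrates all remaining difficulty into a single perfect matching between $A_1$ and $B_0$, which is then dispatched by the one-line alignment $\lambda$; any other ordering of the four blocks would force two simultaneous alignment constraints and fail to leave enough freedom in the bijections.
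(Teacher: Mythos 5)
Your proof is correct, and it takes a genuinely different route from the paper. The paper argues by induction on $n$: starting from the $C_4$ labeling, it builds $\psi_{n+1}$ from two labeled copies of $Q_n$ by shifting the large labels of one copy up by $2^{n-1}$ and the small labels of the other up by $2^n+2^{n-1}$, and it carries along the invariant that every edge has one endpoint labeled at most $2^{n-1}$ and the other labeled above $2^{n-1}$. Your construction is instead a single closed-form labeling: the parity bipartition refined by the last coordinate yields four blocks of size $2^{n-2}$, three of the four edge classes are handled for free by block separation, and the one adjacent pair of blocks ($A_1$ and $B_0$, joined by a perfect matching) is handled by the alignment $\psi((w,1))=M+\lambda(w)$, $\psi((w,0))=2M+\lambda(w)$, giving difference exactly $M=2^{n-2}$ there. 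The two constructions produce structurally similar label distributions (the paper's recursion also ends up interleaving the small/large halves of the two subcubes), but yours trades the inductive bookkeeping for a one-shot verification over four explicit edge classes, which is arguably cleaner and makes transparent exactly where the bound $2^{n-2}$ is tight (on the aligned matching edges); the paper's inductive formulation, on the other hand, is what lets it read off the straddling invariant used again in the proof that $mc^{nh}_8(Q_3)=2$. All your case checks are sound: the internal edges and the even-$w$ matching edges give differences at least $M+1$ and $2M+1$ respectively, the odd-$w$ matching edges give exactly $M$, and the four bijections onto the blocks make the labeling no-hole.
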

\begin{proof}
\input{g2.TpX}
We show $mc^{nh}_{2^{n}}(Q_n)\geq 2^{n-2}$ by constructing a no-hole anti-$2^{n}$-labeling $\psi_{n}$ such that $w^{nh}_{\psi_{n}}(Q_n)\geq2^{n-2}$. If $n=2$, then $Q_n=C_{4}$. By Theorem \ref{obs 18},  $mc^{nh}_{2^{2}}(Q_2)=1\geq 2^{2-2}$. Let $\psi_{2}$ be the optimal no-hole anti-$2^{2}$-labeling  defined in Theorem \ref{obs 18} of  $Q_2$. Clearly, for each edge $e$ of $Q_2$,  one end of $e$ has label at most $2^{2-1}=2$ and the other end of $e$ has label greater $2$ under $\psi_{2}$, see Fig.5(1). For $m\leq n$, we assume there exists a labeling $\psi_m$ such that $w^{nh}_{\psi_{m}}(Q_m)\geq2^{m-2}$, and  one end has label at most $2^{m-1}$ and the other end has label greater $2^{m-1}$ for each edge in $Q_m$. We next construct the labeling $\psi_{n+1}$ satisfying the assumption above for $Q_{n+1}$ from the labeling $\psi_{n}$ defined above of $Q^{1}_n$ and $Q^{2}_n$ as follows.

 Note that an $n+1$-cube $Q_{n+1}$ can be obtained by adding a perfect matching $PM$ between two copies of an $n$-cube, denoted  by  $Q^{1}_n$ and $Q^{2}_n$ (Each edge of $PM$ joins two vertices having the same labels.).  We relabel the vertices with label $i>2^{n-1}$ in $Q^{1}_n$ by $i+2^{n-1}$, and we relabel the vertices with label $i\leq 2^{n-1}$ in $Q^{2}_n$ by $i+2^{n}+2^{n-1}$.

We next show that the assumption above holds for $\psi_{n+1}$ in $Q_{n+1}$. Let $e=uv$ be an edge of $E(Q_{n+1})$. We firstly assume $e\in E(Q^{1}_n)$ and   $\psi_{n}(u)>\psi_{n}(v)$. By the induction hypothesis, we have $\psi_{n}(u)>2^{n-1}$,  $\psi_{n}(v)\leq 2^{n-1}$ and  $\psi_{n}(u)-\psi_{n}(v)\geq2^{n-2}$. Therefore $\psi_{n+1}(u)=\psi_{n}(u)+2^{n-1}>2^{n}$,  $\psi_{n+1}(v)=\psi_{n}(v)\leq2^{n-1}<2^{n}$, and $w^{nh}_{\psi_{n+1}}(e)=|\psi_{n+1}(u)-\psi_{n+1}(v)|=\psi_{n+1}(u)-\psi_{n+1}(v)=\psi_{n}(u)+2^{n-1}-\psi_{n}(v)\geq2^{n-1}+2^{n-2}>2^{n-1}$ according to the definition of $\psi_{n+1}$. If $e\in E(Q^{2}_n)$ and we suppose $\psi_{n}(u)>\psi_{n}(v)$. Then $\psi_{n}(u)>2^{n-1}$,  $\psi_{n}(v)\leq 2^{n-1}$, and  $\psi_{n}(u)-\psi_{n}(v)<2^{n}$.  Therefore $\psi_{n+1}(u)=\psi_{n}(u)<2^{n}$,  $\psi_{n+1}(v)=\psi_{n}(v)+2^{n}+2^{n-1}>2^{n}$, and $w^{nh}_{\psi_{n+1}}(e)=|\psi_{n+1}(u)-\psi_{n+1}(v)|=\psi_{n+1}(v)-\psi_{n+1}(u)=\psi_{n}(v)+2^{n}+2^{n-1}-\psi_{n}(u)>2^{n-1}$. Finally, we assume $e\in E(PM)$. Without loss of generality, we assume $u\in V(Q^{1}_{n})$ and $v\in V(Q^{2}_{n})$. Then $\psi_{n}(u)=\psi_{n}(v)$. If $\psi_{n}(u)\leq 2^{n-1}$, then $\psi_{n+1}(u)=\psi_{n}(u)<2^{n}$,  $\psi_{n+1}(v)=\psi_{n}(v)+2^{n}+2^{n-1}>2^{n}$, and $w^{nh}_{\psi_{n+1}}(e)=2^{n}+2^{n-1}$.  If $\psi_{n}(u)>2^{n}$, then $\psi_{n+1}(u)=\psi_{n}(u)+2^{n-1}>2^{n}$, $\psi_{n+1}(v)=\psi_{n}(v)<2^{n}$, and $w^{nh}_{\psi_{n+1}}(e)=2^{n-1}$.   We complete the proof.
\end{proof}
\begin{thm}\label{thm 3}
Let $Q_3$ be a $3$-cube. Then $mc^{nh}_{8}(Q_3)=2$.
\end{thm}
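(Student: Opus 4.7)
The lower bound $mc^{nh}_8(Q_3)\geq 2$ is immediate from Theorem~\ref{thm 19} with $n=3$. The plan is therefore to establish the matching upper bound $mc^{nh}_8(Q_3)\leq 2$, and the strategy is proof by contradiction using the forced rigidity of any hypothetical labeling with weight $\geq 3$.

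First, Observation~\ref{obs 7} already gives $mc^{nh}_8(Q_3)\leq \lfloor(8-3+1)/2\rfloor=3$, so I only need to rule out equality. Assume for contradiction there is a no-hole anti-$8$-labeling $\psi$ of $Q_3$ with $w^{nh}_\psi(Q_3)\geq 3$; call the vertex with label $i$ simply ``vertex $i$''. Since every vertex of $Q_3$ has degree exactly $3$, the three neighbors of vertex $i$ must have labels drawn from $\{1,\dots,8\}\setminus\{i-2,i-1,i,i+1,i+2\}$. For $i=3$ this candidate set is exactly $\{6,7,8\}$, which has only three elements, so the neighbors of vertex $3$ are precisely $\{6,7,8\}$; symmetrically the neighbors of vertex $6$ are precisely $\{1,2,3\}$. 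The same parity-of-extremes reasoning pins down vertex $4$ (candidate set $\{1,7,8\}$) and vertex $5$ (candidate set $\{1,2,8\}$).

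With these four forced neighborhoods in hand, the remaining adjacencies are determined by degree counting. Vertex $1$ is already adjacent to $4,5,6$, so those are its only three neighbors; consequently vertex $7$, whose candidate set is $\{1,2,3,4\}$ and which is already adjacent to $3$ and $4$, must take $2$ (not $1$) as its third neighbor, and symmetrically vertex $8$'s neighbors must be exactly $\{3,4,5\}$, forcing vertex $2$'s neighbors to be $\{5,6,7\}$. The resulting graph is thus uniquely determined; I would then exhibit the cycle $1\to 4\to 7\to 2\to 5\to 1$, which is a $5$-cycle and is odd. But $Q_3$ is bipartite, so it contains no odd cycle, the desired contradiction.

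The only real obstacle is the bookkeeping that forces the whole adjacency table from the extremal labels $3$ and $6$; once that is carried out, producing the odd cycle is a one-line check. I do not anticipate any subtlety beyond this case analysis, since the smallness of the candidate sets at labels $3,4,5,6$ makes the labeling essentially rigid up to the symmetry $i\mapsto 9-i$.
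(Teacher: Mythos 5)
Your proof is correct, and it opens exactly as the paper does: lower bound from Theorem~\ref{thm 19}, then assume a labeling with $w^{nh}_\psi(Q_3)\geq 3$ and use $3$-regularity to force the neighborhoods of the middle labels (the paper forces $N(v_4)=\{v_1,v_7,v_8\}$, $N(v_5)=\{v_1,v_2,v_8\}$, $N(v_6)=\{v_1,v_2,v_3\}$; you additionally force $N(v_3)=\{v_6,v_7,v_8\}$). Where you diverge is in how bipartiteness of $Q_3$ delivers the contradiction. The paper stops after the three forced neighborhoods and does a cardinality count on the bipartition $(X,Y)$ with $|X|=|Y|=4$: placing $v_4\in X$ forces $v_1,v_7,v_8\in Y$, then $v_5,v_6\in X$ (each is adjacent to $v_1$), then $v_2,v_3\in Y$ via $N(v_6)$, giving $|Y|\geq 5$, a contradiction. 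You instead propagate the forced adjacencies all the way to a complete adjacency table ($N(v_1)=\{v_4,v_5,v_6\}$, hence $v_7$'s third neighbor is $v_2$, etc.) and then exhibit the odd cycle $v_1v_4v_7v_2v_5v_1$. Both finishes are valid uses of bipartiteness; the paper's count is shorter and needs no information about $v_7$'s or $v_2$'s neighborhoods, while your version yields the stronger byproduct that the hypothetical labeled graph is rigid (unique up to the symmetry $i\mapsto 9-i$), which makes the contradiction very concrete. Your preliminary remark that Observation~\ref{obs 7} already caps the value at $3$ is a nice touch the paper omits, since it shows only the single value $3$ needs to be excluded.
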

\begin{proof}
We have $mc^{nh}_{8}(Q_3)\geq2$ by Theorem \ref{thm 19}.
We next show $mc^{nh}_{8}(Q_3)\leq2$ by contradiction. Suppose $mc^{nh}_{8}(Q_3)\geq3$. Let $\psi$ be an optimal labeling and we denote by $v_i$ the vertex with label $i$ under $\psi$. Then $v_{4}$ may only be  adjacent to vertices $v_{1},v_{7},v_{8}$, $v_{5}$ may only be  adjacent to vertices $v_{1},v_{2},v_{8}$, and $v_{6}$ may only be  adjacent to vertices $v_{1},v_{2},v_{3}$ in $Q_3$ due to $mc^{nh}_{8}(Q_3)\geq3$.  Note that $Q_3$ is a bipartite graph. Let the bipartition of $Q_3$ be $X,Y$, and $|X|=|Y|=2^{3-1}=4$. Without loss of generality, we assume $v_{4}\in X$. Then $v_{1},v_{7},v_{8}\in Y$, and  $v_{5},v_{6}\in X$. Hence, $v_{1},v_{2},v_{3},v_{7},v_{8}\in Y$, that is, $|Y|=5$, a contradiction.
\end{proof}

Note that the bound in Theorem \ref{thm 19} is sharp for $n=2,3$. We pose the following problem.

\begin{con}\label{con 21}
For all $n\geq2$, $mc^{nh}_{2^{n}}(Q_n)=2^{n-2}$.
\end{con}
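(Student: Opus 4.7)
The lower bound $mc^{nh}_{2^n}(Q_n)\ge 2^{n-2}$ is Theorem~\ref{thm 19}, so the plan is to establish the matching upper bound $mc^{nh}_{2^n}(Q_n)\le 2^{n-2}$ by contradiction. Assume a no-hole anti-$2^n$-labeling $\psi$ of $Q_n$ satisfies $w^{nh}_\psi(Q_n)\ge 2^{n-2}+1$, and write $v_i:=\psi^{-1}(i)$, $w:=2^{n-2}$.

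Two structural consequences of the assumption form the backbone of the argument. First, every window $W_j:=\psi^{-1}\{j,j+1,\dots,j+w\}$ of $w+1$ consecutive labels is an independent set in $Q_n$, because any edge inside it would have weight at most $w$. Second, the admissible neighbor-label set $L_i:=\{1,\dots,i-w-1\}\cup\{i+w+1,\dots,2^n\}$ of $v_i$ has cardinality $2^{n-1}-1$ for middle labels $i\in\{w+1,\dots,3w\}$, matching the degree $n$ exactly when $n=3$ (the tight forcing behind Theorem~\ref{thm 3}). The case $n=2$ of the conjecture falls from the window observation alone: $|W_j|=2=\alpha(Q_2)$, so each $W_j$ coincides with one of the bipartition classes $X,Y$, and the overlaps $W_j\cap W_{j+1}\ne\emptyset$ propagate this into every label, contradicting $|X|=|Y|=2$. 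For $n=3$ the tight forcing is Theorem~\ref{thm 3}.

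For $n\ge 4$ both ingredients are too weak individually and must be combined. My main plan is to apply Harper's vertex-isoperimetric inequality on prefix sets $S_i:=\psi^{-1}\{1,\dots,i\}$, noting that the window constraint forbids $v_{i+k}\in V\setminus S_i$ $(1\le k\le w)$ from being adjacent to any $v_j\in S_i$ with $j>i+k-w-1$. This localizes the neighborhood of $v_{i+k}$ in $S_i$ inside the small sub-prefix $S_{i+k-w-1}$, and aggregating these restrictions over $k$ yields an upper bound $|N(S_i)\setminus S_i|\le\Phi(i,n)$ strictly smaller than the trivial $2^n-i$ when $i$ is near $2^{n-1}$ or $3\cdot 2^{n-2}$. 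Harper's inequality gives a matching lower bound $|N(S_i)\setminus S_i|\ge h(i,n)$, sharp at Hamming-ball sizes $\sum_{j\le r}\binom{n}{j}$. Choosing $i$ to be such a ball size with $r$ slightly above $n/2$ should make $h(i,n)\ge\binom{n}{r+1}$ exceed $\Phi(i,n)$, producing the required contradiction.

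The hardest step is making the comparison $h(i,n)>\Phi(i,n)$ work uniformly in $n$: for small $n$ the middle-vertex forcing is already decisive, but for large $n$ the Hamming-ball profile of $Q_n$ flattens near $n/2$ and the isoperimetric gap narrows, so one may need to feed the bipartition $(X,Y)$ back in via the parity constraints $\chi_{v_a}\ne\chi_{v_b}$ on forced adjacencies, in the spirit of the proof of Theorem~\ref{thm 3}. A computational verification of the conjecture for $n=4$ and $n=5$ is a prudent sanity check before committing to the full program, especially since ad-hoc attempts to build labelings of $Q_4$ with $w^{nh}_\psi(Q_4)\ge 5$ by extending five-vertex independent prefixes appear to dead-end quickly, consistent with the conjecture.
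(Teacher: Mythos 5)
This statement is posed in the paper as an open problem (Conjecture~\ref{con 21}): the paper proves only the lower bound $mc^{nh}_{2^n}(Q_n)\ge 2^{n-2}$ (Theorem~\ref{thm 19}) and the matching upper bound for $n=2$ (via Observation~\ref{obs 7}, since $Q_2=C_4$) and $n=3$ (Theorem~\ref{thm 3}), explicitly remarking that sharpness beyond $n=3$ is unknown. Your proposal does not close this gap: it is a program, not a proof. The cases you actually settle ($n=2$ via the window/independence observation, $n=3$ via the paper's own Theorem~\ref{thm 3}) are exactly the cases already known, and for $n\ge 4$ the decisive step --- the comparison $h(i,n)>\Phi(i,n)$ --- is never carried out. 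The quantity $\Phi(i,n)$ is not defined concretely, no candidate $i$ is exhibited for which the inequality holds, and the argument is hedged with ``should make'' and an admission that the approach ``may need'' further input for large $n$. A proof by contradiction that never derives the contradiction is a gap, not a proof.

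Moreover, there is reason to doubt the isoperimetric route as stated. Take the natural prefix $S_i$ with $i=2^{n-1}$. Harper's inequality gives $|N(S_i)\setminus S_i|\ge\binom{n}{\lceil n/2\rceil}\approx 2^n/\sqrt{n}$, while the constraint $w^{nh}_\psi(Q_n)\ge 2^{n-2}+1$ only localizes the $S_i$-neighborhoods of the vertices $v_{i+1},\dots,v_{i+2^{n-2}}$ inside small sub-prefixes; the remaining $2^{n-2}$ vertices $v_j$ with $j>i+2^{n-2}$ are unrestricted and may all lie in $N(S_i)\setminus S_i$. So the best upper bound your localization yields without further work is on the order of $2^{n-2}$, which \emph{exceeds} Harper's lower bound for all large $n$ --- the inequality points the wrong way precisely where you need it. To make this strategy viable you would have to extract substantially more from the window constraints (e.g.\ the bipartition parity forcing you mention, made quantitative), or verify $n=4,5$ computationally and find a genuinely different induction. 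As it stands, the conjecture remains open and your write-up should be presented as a proposed attack, not as a proof.
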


\section{Anti-$L_{d}(2,1)$-labeling of graphs}

Given a simple graph $G=(V,E)$ and a positive number $d$, an $L_{d}(2,1)$-labeling of $G$ is a function $f: V(G)\rightarrow[0,\infty)$
such that whenever $x,y \in V$ are adjacent, if $|f(x)- f(Y)| \geq 2d$,
and whenever the distance between $x$ and $y$ is two, if $|f(x)- f(Y)| \geq d$. The $L_{d}(2,1)$-labeling number $\lambda(G, d)$ is the smallest number $m$ such that $G$ has an $L_{d}(2,1)$-labeling $f$ with $\max\{f(v):v\in V\} =m$. When $d=1$, the $L_{d}(2,1)$-labeling problem is the $L(2,1)$-labeling problem. The  $L(2,1)$-labeling problem of graphs has been discussed for many graph families, see~\cite{Calamoneri, Chang1, Chang2, Griggs, Yeh1, Yeh2}.

Similarly, we define the \emph{anti-$L_{d}(2,1)$-labeling} problem: Given a simple graph $G=(V,E)$ and a positive number $d$, a labeling of $G$ is a function $f: V(G)\rightarrow[1,k]$
such that $|f(x)- f(Y)| \geq 2d$ if $xy \in E(G)$, $|f(x)- f(Y)| \geq d$ if $d(x,y)=2$. The \emph{anti-$L_{d}(2,1)$-labeling number} of $G$, denoted by $mc^{\lambda}_{k}(G)$, is the largest number $2d$.

By the proofs of Observation \ref{obs 1} and Observation \ref{obs 3}, we have the results of Observation \ref{obs 22} and Observation \ref{obs 23} as following.

\begin{obs}\label{obs 22}
If  $H$ is a subgraph of $G$, then  $mc^{\lambda}_k(H)\geq mc^{\lambda}_k(G)$.
\end{obs}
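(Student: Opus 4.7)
The plan is to mirror the restriction argument used for Observation~\ref{obs 1}: take an optimal anti-$L_d(2,1)$-labeling of $G$, restrict it to $V(H)$, and verify that the restriction is still feasible for $H$ with the same value of $d$. Concretely, I would fix an optimal labeling $f:V(G)\to[1,k]$ of $G$ realizing $mc^{\lambda}_{k}(G)=2d^{*}$, and consider $f|_{H}$, the restriction of $f$ to $V(H)\subseteq V(G)$.

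Next I would check the two defining conditions of an anti-$L_d(2,1)$-labeling on $H$, with the parameter $d=d^{*}$. For the edge condition, every $xy\in E(H)$ is by assumption an edge of $G$, so $|f(x)-f(y)|\geq 2d^{*}$ is inherited immediately. For the distance-two condition, suppose $x,y\in V(H)$ with $d_{H}(x,y)=2$; then there exists $z\in V(H)$ with $xz,zy\in E(H)\subseteq E(G)$, whence $d_{G}(x,y)\leq 2$. If $d_{G}(x,y)=1$, the edge condition in $G$ gives $|f(x)-f(y)|\geq 2d^{*}\geq d^{*}$; if $d_{G}(x,y)=2$, the distance-two condition in $G$ gives $|f(x)-f(y)|\geq d^{*}$. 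Either way the required inequality holds in $H$.

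Having verified both constraints, $f|_{H}$ is a valid anti-$L_d(2,1)$-labeling of $H$ with parameter $d^{*}$, so by definition $mc^{\lambda}_{k}(H)\geq 2d^{*}=mc^{\lambda}_{k}(G)$, which is what we wanted.

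The argument is essentially routine and does not have a genuinely hard step; the only subtle point worth stating explicitly is that distances in $H$ dominate distances in $G$ (since $H$ has no extra edges), so the distance-$2$ hypothesis in $H$ translates into a distance at most $2$ in $G$, exactly the regime where the constraints on $f$ apply. This is the only place where one might be tempted to slip, since one is going from a subgraph condition to a supergraph condition on distances; handling both sub-cases ($d_G(x,y)=1$ and $d_G(x,y)=2$) closes the gap.
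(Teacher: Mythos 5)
Your proof is correct and follows the same route the paper intends: the paper gives no separate argument for this observation, merely asserting it follows "by the proof of Observation~\ref{obs 1}," i.e.\ by restricting an optimal labeling of $G$ to $V(H)$. You supply the one detail the paper leaves implicit --- that $d_H(x,y)=2$ forces $d_G(x,y)\leq 2$, and both resulting cases yield $|f(x)-f(y)|\geq d^{*}$ --- which is exactly the right point to make explicit.
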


\begin{obs}\label{obs 23}
If $G=G_{1}\cup G_{2}$, then $mc^{\lambda}_k(G)=\min\{mc^{\lambda}_k(G_{1}),mc^{\lambda}_k(G_{2})\}$.
\end{obs}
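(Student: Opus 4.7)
The plan is to mirror the proof of Observation~\ref{obs 3}, splitting the argument into the two inequalities $mc^{\lambda}_k(G)\le \min\{mc^{\lambda}_k(G_1),mc^{\lambda}_k(G_2)\}$ and $mc^{\lambda}_k(G)\ge \min\{mc^{\lambda}_k(G_1),mc^{\lambda}_k(G_2)\}$, with a small additional check for the distance-$2$ clause that was not present in the original argument.

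For the upper bound I would appeal directly to Observation~\ref{obs 22}. Since $V(G_1)\cap V(G_2)=\emptyset$ and $E(G)=E(G_1)\cup E(G_2)$, each of $G_1,G_2$ is a subgraph of $G=G_1\cup G_2$, so Observation~\ref{obs 22} gives $mc^{\lambda}_k(G_i)\ge mc^{\lambda}_k(G)$ for $i=1,2$, and the minimum on the right dominates $mc^{\lambda}_k(G)$.

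For the lower bound I would take optimal anti-$L_d(2,1)$-labelings $\psi_i:V(G_i)\to[1,k]$ achieving value $mc^{\lambda}_k(G_i)=2d_i$, and paste them into a single labeling $\psi:V(G)\to[1,k]$ of the disjoint union. The only point where the argument goes beyond the proof of Observation~\ref{obs 3} is that one must verify \emph{both} constraints in the definition. Every edge $e\in E(G)$ lies in $E(G_1)$ or $E(G_2)$, so its endpoints are separated by at least $\min\{2d_1,2d_2\}$. The distance-$2$ clause is where I would be slightly more careful: any pair $x,y$ with $d_G(x,y)=2$ must lie in the same connected component, hence both in $V(G_1)$ or both in $V(G_2)$, because $G_1$ and $G_2$ share no vertex and no edge crosses between them; therefore such a pair inherits the required gap of at least $\min\{d_1,d_2\}$ from the relevant $\psi_i$. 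Hence $\psi$ is a valid anti-$L_d(2,1)$-labeling witnessing $mc^{\lambda}_k(G)\ge \min\{mc^{\lambda}_k(G_1),mc^{\lambda}_k(G_2)\}$.

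I do not expect any real obstacle here; the proof is essentially the same two-sided sandwich as Observation~\ref{obs 3}. The single new ingredient is the remark that distance in $G=G_1\cup G_2$ is infinite between the two components, which is what makes the distance-$2$ constraint respected automatically by the pasted labeling. Combining the two inequalities yields the claimed equality.
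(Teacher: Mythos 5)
Your proof is correct and follows essentially the same route as the paper, which does not spell out an argument but simply states that Observation~\ref{obs 23} follows by the proof of Observation~\ref{obs 3} (subgraph monotonicity for the upper bound, pasting optimal labelings for the lower bound). Your extra check that any distance-$2$ pair in $G_1\cup G_2$ lies entirely within one $G_i$ is exactly the small point the paper leaves implicit, and it is handled correctly.
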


\begin{lem}[\cite{Griggs}]\label{lem 24}
$\lambda(G,d)=d\cdot\lambda(G,1)$ for a non-negative integer $d$.
\end{lem}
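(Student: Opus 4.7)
The plan is to prove both inequalities $\lambda(G,d) \le d\cdot\lambda(G,1)$ and $\lambda(G,d) \ge d\cdot\lambda(G,1)$ by a direct scaling argument. The key observation enabling this is that the definition allows the labeling $f$ to take values in $[0,\infty)$, so multiplication or division by the positive scalar $d$ keeps us in the admissible range.

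For the upper bound, I would start from an optimal $L_1(2,1)$-labeling $f : V(G)\to[0,\infty)$ with $\max_v f(v) = \lambda(G,1)$ and define $g(v) = d\cdot f(v)$. Then for any edge $xy$, $|g(x)-g(y)| = d|f(x)-f(y)| \ge d\cdot 2 = 2d$, and for any pair $x,y$ at distance $2$, $|g(x)-g(y)| \ge d\cdot 1 = d$. Hence $g$ is an $L_d(2,1)$-labeling with $\max_v g(v) = d\cdot\lambda(G,1)$, so $\lambda(G,d) \le d\cdot\lambda(G,1)$.

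For the matching lower bound, I would run the scaling in reverse: take an optimal $L_d(2,1)$-labeling $g$ with $\max_v g(v)=\lambda(G,d)$ and set $f(v) = g(v)/d$. The separation conditions for $g$ translate directly into $|f(x)-f(y)|\ge 2$ on edges and $|f(x)-f(y)|\ge 1$ on distance-$2$ pairs, so $f$ is an $L_1(2,1)$-labeling with maximum value $\lambda(G,d)/d$. Therefore $\lambda(G,1) \le \lambda(G,d)/d$, which rearranges to $d\cdot\lambda(G,1) \le \lambda(G,d)$. Combining the two inequalities yields the stated equality.

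I do not foresee any serious obstacle here; the real-valued label space makes the scaling argument essentially symmetric. The only subtlety worth flagging is that $\lambda(G,d)$ is defined as the smallest $m$ attainable, so one should verify that infima are actually attained (or argue with $\lambda(G,d)+\varepsilon$-labelings and let $\varepsilon\to 0$) — but this is a standard continuity remark and not a genuine difficulty. If, in contrast, the definition had restricted $f$ to integer values, the lower bound direction would require a more careful rounding argument; the present framing sidesteps that issue entirely.
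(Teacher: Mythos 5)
The paper only cites this result from Griggs and Yeh and supplies no proof of its own, so there is no internal argument to compare against; your two-sided scaling argument is the standard proof and is correct for $d>0$, since the labels are real-valued and, for a finite graph, the feasible set of labelings with a prescribed maximum is compact, so the optimum is attained. The one loose end is the case $d=0$, which the statement of the lemma allows but your division step does not cover; it is trivial, since the separation conditions become vacuous and $\lambda(G,0)=0=0\cdot\lambda(G,1)$.
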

\begin{lem}[\cite{Gonalves}]\label{lem 25}
$\lambda(G,1)\leq \Delta^{2}+\Delta-2$.
\end{lem}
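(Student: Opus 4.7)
Since Lemma~\ref{lem 25} is imported as a citation to \cite{Gonalves}, the authors treat it as a black box; the following is a plan for how one would prove $\lambda(G,1)\le \Delta^{2}+\Delta-2$ from scratch, following the line of argument that produced the successive refinements $\Delta^{2}+2\Delta$ (trivial greedy), $\Delta^{2}+\Delta$ (Chang--Kuo), $\Delta^{2}+\Delta-1$ (Kr\'al'--\v{S}krekovski), and finally Gon\c{c}alves's bound. Throughout, an $L(2,1)$-labeling is viewed as a proper coloring of the square $G^{2}$ in which, additionally, the edges of $G$ (a subset of the edges of $G^{2}$) must separate their endpoints by at least $2$.

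The plan is induction on $|V(G)|$. Choose a vertex $v$ to delete, apply the induction hypothesis to label $G-v$ with labels in $\{0,1,\ldots,\Delta^{2}+\Delta-2\}$, and then show that one label in this range remains legal for $v$. The number of labels forbidden at $v$ is at most $3d(v)+$(number of second-neighbors of $v$), because each neighbor of $v$ forbids itself together with the two adjacent integers, while each vertex at distance exactly $2$ forbids only its own label. Bounding the second-neighbors by $d(v)(\Delta-1)$ and $d(v)$ by $\Delta$ yields at most $\Delta^{2}+2\Delta$, which establishes the crude bound. Extracting the first $\Delta$ units of improvement rests on the observation that when two neighbors of $v$ receive consecutive labels several of the ``$\pm 1$'' prohibitions collapse, and that the extreme labels $0$ and $\Delta^{2}+\Delta-2$ cost two rather than three forbidden positions each; this is conveniently formalized by applying Hall's theorem to a bipartite matching between ``candidate labels for $v$'' and a suitable set of second-neighbors, an idea already present in Chang--Kuo.

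The hard step is squeezing out the last $2$ units. Here the argument ceases to be purely local: it may happen that the induction produces a labeling of $G-v$ in which every potential label for $v$ is blocked, and one must perform a \emph{swap} of labels at two carefully chosen vertices, or a rotation along a short alternating path in the ``label-conflict graph'', to free a slot for $v$ without violating the $L(2,1)$-constraints anywhere else. The main obstacle is cataloguing the extremal configurations --- which neighbors of $v$ hold the highest and lowest labels, whether any pair of them is adjacent or at distance $2$, and which second-neighbors carry the ``boundary'' labels --- and then verifying in each case that a valid swap is available. This delicate case analysis is the technical core of Gon\c{c}alves's proof and is the step I would, in practice, cite rather than reconstruct.
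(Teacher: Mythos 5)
The paper does not prove this lemma at all: it is imported verbatim as a citation to \cite{Gonalves} and used as a black box in Theorem \ref{thm 26}. So your decision to treat the statement as a quotation from the literature is exactly what the authors do, and there is no in-paper argument to compare yours against.

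Judged as a proof, however, your proposal has a genuine gap, and you name it yourself. The greedy count ($3d(v)$ forbidden values from neighbours plus at most $d(v)(\Delta-1)$ from second neighbours, giving $\Delta^{2}+2\Delta$) is fine, and the route to $\Delta^{2}+\Delta$ is standard, though I would note that Chang and Kuo's improvement comes from a different global algorithm (greedily assigning each label to a maximal $2$-stable set) rather than from the local ``collapsing $\pm1$ prohibitions'' observation you describe. But the entire content of the lemma as stated --- the final drop to $\Delta^{2}+\Delta-2$ --- is only gestured at via ``a swap of labels at two carefully chosen vertices'' and ``a rotation along a short alternating path,'' with the case catalogue explicitly deferred to the citation. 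Since that case analysis \emph{is} Gon\c{c}alves's theorem, what you have written is a historical survey of the bound plus the same citation the paper uses, not a proof. One further caveat worth recording: the inequality as printed fails for $\Delta=1$ (already $\lambda(K_{2},1)=2>0=\Delta^{2}+\Delta-2$), and Gon\c{c}alves's result is stated for $\Delta\geq 3$; any reconstruction would have to carry that hypothesis, and the paper's unqualified statement of the lemma is slightly too strong.
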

\begin{thm}\label{thm 26}
Let $G$ is a simple graph, then
$mc^{\lambda}_{k}(G)\geq2\lfloor\frac{k-1}{\Delta^{2}+\Delta-2}\rfloor$.
\end{thm}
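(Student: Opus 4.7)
The plan is to combine the two cited lemmas with a simple scale-and-shift construction, turning a cheap $L(2,1)$-labeling into an expensive anti-$L_{d}(2,1)$-labeling. Set
\[
d \;=\; \left\lfloor \frac{k-1}{\Delta^{2}+\Delta-2} \right\rfloor,
\]
so that $2d$ is exactly the bound we want to realize. If $d=0$ there is nothing to prove (any constant-free labeling works for the trivial claim $mc^{\lambda}_k(G)\ge 0$), so I would assume $d\ge 1$, which already forces $k\ge \Delta^{2}+\Delta-1$.

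First I would invoke Lemma~\ref{lem 25} to obtain an ordinary $L(2,1)$-labeling $f_{0}\colon V(G)\to\{0,1,\dots,\Delta^{2}+\Delta-2\}$. By definition this satisfies $|f_{0}(x)-f_{0}(y)|\ge 2$ whenever $xy\in E(G)$ and $|f_{0}(x)-f_{0}(y)|\ge 1$ whenever $d(x,y)=2$. Then I would define the candidate anti-labeling
\[
f(v) \;=\; 1 + d\cdot f_{0}(v) \qquad (v\in V(G)).
\]
The range check is immediate: $1 \le f(v) \le 1 + d(\Delta^{2}+\Delta-2) \le 1 + (k-1) = k$, so $f$ maps into $[1,k]$ as required. (This is where the choice of $d$ as the floor gets used.)

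Next I would verify the two anti-$L_{d}(2,1)$ conditions by scaling. For $xy\in E(G)$,
\[
|f(x)-f(y)| \;=\; d\cdot|f_{0}(x)-f_{0}(y)| \;\ge\; 2d,
\]
and for $d(x,y)=2$,
\[
|f(x)-f(y)| \;=\; d\cdot|f_{0}(x)-f_{0}(y)| \;\ge\; d.
\]
Thus $f$ is a valid anti-$L_{d}(2,1)$-labeling of $G$ with values in $[1,k]$, and by the definition of $mc^{\lambda}_{k}(G)$ as the largest such $2d$, we conclude
\[
mc^{\lambda}_{k}(G) \;\ge\; 2d \;=\; 2\left\lfloor \frac{k-1}{\Delta^{2}+\Delta-2} \right\rfloor.
\]

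There is no real obstacle here: the proof is essentially the observation that Lemma~\ref{lem 24} (linearity $\lambda(G,d)=d\lambda(G,1)$) runs in reverse for our problem, and Lemma~\ref{lem 25} supplies the baseline $\lambda(G,1)\le\Delta^{2}+\Delta-2$. The only mild point is the $+1$ shift needed because the anti-labeling is required to land in $[1,k]$ rather than in $[0,k-1]$, which is harmlessly absorbed by the bound $1+d(\Delta^{2}+\Delta-2)\le k$.
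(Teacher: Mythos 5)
Your proof is correct and follows essentially the same route as the paper: both arguments rest on Lemma~\ref{lem 25} for the baseline bound $\lambda(G,1)\le\Delta^{2}+\Delta-2$ and on multiplying an optimal $L(2,1)$-labeling by $d=\lfloor\frac{k-1}{\Delta^{2}+\Delta-2}\rfloor$ (then shifting into $[1,k]$), which is precisely the scaling encapsulated by Lemma~\ref{lem 24}. The only cosmetic difference is that you inline that scaling as an explicit construction, which makes the range check $1+d(\Delta^{2}+\Delta-2)\le k$ cleaner than the paper's rather terse invocation of $\lambda(G,d)=d\cdot\lambda(G,1)$.
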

\begin{proof}
Suppose $\lambda(G,d)=m$  for a graph $G$. Then there exists a labeling $f: V(G)\rightarrow[0,m]$ such that whenever $x,y \in V$ are adjacent, if $|f(x)- f(Y)| \geq 2d$,
and whenever the distance between $x$ and $y$ is two, if $|f(x)- f(Y)| \geq d$. Therefore, there exists a labeling $\psi: V(G)\rightarrow[1,m+1]$, such that $w_{\psi}(G)=2d$ for $k=m+1=\lambda(G,d)+1$. According to Lemma \ref{lem 24}, there exists a labeling $\psi$, such that $w^{\lambda}_{\psi}(G)=2\frac{k-1}{\lambda(G,1)}$ for all $k$.
Therefore $mc^{\lambda}_{k}(G)\geq2\lfloor\frac{k-1}{\lambda(G,1)}\rfloor$ for all $k$  according to the definition of anti-$L_{d}(2,1)$-labeling number $mc^{\lambda}_{k}(G)$. Combining  with Lemma \ref{lem 25}, we have
$mc^{\lambda}_{k}(G)\geq2\lfloor\frac{k-1}{\Delta^{2}+\Delta-2}\rfloor$.
\end{proof}
\begin{thm}\label{lem 27}
If $mc^{\lambda}_{k}(G)=2d$ for a positive number $k$, then
$\frac{k-1}{d+1}<\lambda(G,1)\leq\frac{k-1}{d}$.
\end{thm}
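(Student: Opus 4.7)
\medskip

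\noindent\textbf{Proof proposal.} The plan is to prove both inequalities by exploiting a simple correspondence between anti-$L_{d}(2,1)$-labelings and $L_{d}(2,1)$-labelings: the constraints $|f(x)-f(y)|\geq 2d$ on adjacent vertices and $|f(x)-f(y)|\geq d$ on distance-$2$ vertices are identical in the two settings; only the codomain differs ($[1,k]$ versus $[0,\infty)$). A shift by $1$ converts any anti-$L_{d}(2,1)$-labeling $\psi\colon V(G)\to[1,k]$ into an $L_{d}(2,1)$-labeling $f\colon V(G)\to[0,k-1]$, and vice versa. Combined with Lemma~\ref{lem 24}, which gives $\lambda(G,d)=d\cdot\lambda(G,1)$, this will let me pass freely between the two quantities.

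For the upper bound $\lambda(G,1)\le \frac{k-1}{d}$: take an optimal anti-$L_{d}(2,1)$-labeling $\psi$ realising the value $mc^{\lambda}_{k}(G)=2d$ with $\psi\colon V(G)\to[1,k]$. Setting $f(v)=\psi(v)-1$ gives an $L_{d}(2,1)$-labeling into $[0,k-1]$, so by the definition of $\lambda(G,d)$ we have $\lambda(G,d)\le k-1$. Applying Lemma~\ref{lem 24} yields $d\cdot\lambda(G,1)\le k-1$, i.e.\ $\lambda(G,1)\le\frac{k-1}{d}$.

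For the lower bound $\lambda(G,1)>\frac{k-1}{d+1}$: I argue by contradiction. Suppose $\lambda(G,1)\le\frac{k-1}{d+1}$. By Lemma~\ref{lem 24},
\[
\lambda(G,d+1)=(d+1)\lambda(G,1)\le k-1,
\]
so there is an $L_{d+1}(2,1)$-labeling $f\colon V(G)\to[0,k-1]$. Shifting, $\psi(v):=f(v)+1$ is a map into $[1,k]$ that satisfies $|\psi(x)-\psi(y)|\ge 2(d+1)$ for adjacent $x,y$ and $|\psi(x)-\psi(y)|\ge d+1$ for distance-$2$ pairs, hence it is an anti-$L_{d+1}(2,1)$-labeling of $G$ with value $2(d+1)>2d$. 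This contradicts the maximality $mc^{\lambda}_{k}(G)=2d$, completing the proof.

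The argument is essentially routine; the only delicate point is to keep track of which inequality is strict. The lower bound is strict because the maximum $2d$ already rules out every strictly larger value $2(d+1)$; the upper bound is non-strict because the witnessing labeling is assumed to exist. No deeper combinatorial input is needed beyond Lemma~\ref{lem 24} and the shift $\psi\leftrightarrow \psi-1$.
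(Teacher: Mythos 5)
Your proof is correct and follows essentially the same route as the paper's: both arguments reduce to showing $\lambda(G,d)\leq k-1<\lambda(G,d+1)$ (the upper bound from the optimal labeling witnessing $mc^{\lambda}_{k}(G)=2d$, the lower bound by contradiction from the maximality of $2d$) and then apply Lemma~\ref{lem 24}. Your version is slightly more careful than the paper's in making the shift between the codomains $[1,k]$ and $[0,k-1]$ explicit, but the underlying argument is identical.
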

\begin{proof}
Suppose $mc^{\lambda}_{k}(G)=2d$  for a graph $G$. Then $\lambda(G,d)+1\leq k <\lambda(G,d+1)+1$. In fact, it is obvious that  $\lambda(G,d)+1\leq k$, since  $G$ has an $L_{d}(2,1)$-labeling  for all  positive number $k$ and $\lambda(G, d)$ is the smallest number $m$ such that $G$ has an $L_{d}(2,1)$-labeling $f$. Suppose $k \geq\lambda(G,d+1)+1$. Then there exists a labeling $\psi$, such that $w^{\lambda}_{\psi}(G)=2(d+1)$. Hence $mc^{\lambda}_{k}(G)\geq 2(d+1)$ according to the definition of $mc^{\lambda}_{k}(G)$, a  contradiction. Hence, $d\cdot \lambda(G,1)+1\leq k <(d+1)\cdot\lambda(G,1)+1$ combining  with Lemma \ref{lem 24}, that is $\frac{k-1}{d+1}<\lambda(G,1)\leq\frac{k-1}{d}$.
\end{proof}

\section{Acknowledgements}
The research is supported by NSFC (No.11671296), Research Project Supported by Shanxi Scholarship Council of China,  Program for the Innovative Talents of Higher Learning Institutions of Shanxi (PIT).


\begin{thebibliography}{10}

\bibitem{bondy} J. A. Bondy and U. S. R. Murty, Graph theory with
application,  North-Holland, New York, 1976.
\bibitem{Calamoneri} T. Calamoneri, The $L(2, 1)$-labeling problem on oriented regular grids, Comput. J. 54  (2011) 1869-1875,.
\bibitem{Chang1} G. J. Chang, J. J. Chen, D. Kuo, and S. C. Liaw, Distance-two labelings of digraphs, Discrete Appl. Math. 155 (2007) 1007-1013.
\bibitem{Chang2} G. J. Chang, S. C. Liaw, The $L(2, 1)$-labeling problem on ditrees, Ars Combin. 66 (2003) 23-31.
\bibitem{Fertin} G. Fertin, A. Raspaud, and O. Sykora, No-Hole $L(p,0)$ Labelling of Cycles, Grids and Hypercubes, Lecture Notes in Computer Science, 45 (2004) 138-148.
\bibitem{Gonalves} D. Gonalves, On the $L(p , 1)$-labelling of graphs, Discrete Math. 308 (2008)  1405-1414.
\bibitem{Griggs} J. R. Griggs and R. K. Yeh, Labelling graphs with a condition at distance 2, SIAM J. Discrete Math.  5 (1992) 586-595,.
\bibitem{Yeh1} R. K. Yeh, Labeling graphs with a condition at distance two, Ph.D. Thesis, University of South Carolina, Columbia, South Carolina, 1990.
\bibitem{Yeh2} R. K. Yeh, A survey on labeling graphs with a condition at distance two, Discrete Math. 306 (2006)  1217-1231.

\end{thebibliography}
\end{document}